\documentclass{article}%
\usepackage{amsmath}
\usepackage{amsfonts}
\usepackage{amssymb}
\usepackage{graphicx}%
\setcounter{MaxMatrixCols}{30}
%TCIDATA{OutputFilter=latex2.dll}
%TCIDATA{Version=5.50.0.2890}
%TCIDATA{CSTFile=40 LaTeX article.cst}
%TCIDATA{Created=Wednesday, September 29, 2010 06:24:59}
%TCIDATA{LastRevised=Monday, August 15, 2011 17:39:47}
%TCIDATA{<META NAME="GraphicsSave" CONTENT="32">}
%TCIDATA{<META NAME="SaveForMode" CONTENT="1">}
%TCIDATA{BibliographyScheme=Manual}
%TCIDATA{<META NAME="DocumentShell" CONTENT="Standard LaTeX\Blank - Standard LaTeX Article">}
%BeginMSIPreambleData
\providecommand{\U}[1]{\protect\rule{.1in}{.1in}}
%EndMSIPreambleData
\newtheorem{theorem}{Theorem}

\newtheorem{corollary}[theorem]{Corollary}

\newtheorem{definition}[theorem]{Definition}

\newtheorem{lemma}[theorem]{Lemma}

\newtheorem{proposition}[theorem]{Proposition}
\newtheorem{remark}[theorem]{Remark}

\newenvironment{proof}[1][Proof]{\noindent\textbf{#1.} }{\ \rule{0.5em}{0.5em}}
\begin{document}

\title{$2$-Graded Identities for the Tensor Square of the Grassmann Algebra}

\title{On the Graded Identities for Elementary Gradings in Matrix Algebras over Infinite Fields}

\author{Diogo Diniz Pereira da Silva e Silva \footnote{Supported by CNPq}}
\date{{\small \textit{Unidade Acadêmica de Matemática e Estatística\\
Universidade Federal de Campina Grande\\
Cx. P. 10.044, 58429-970, Campina Grande, PB, Brazil}}\\
E-mail: diogo@dme.ufcg.edu.br}
\maketitle

\textbf{Keywords}: graded algebras, graded polynomial identities, grassmann algebra

\textit{AMS Subject Classification}: 16W50 16R10 15A75

\maketitle

\begin{abstract}
 We consider the algebra $E\otimes E$ over an infinite field equipped with a $\mathbb{Z}_2$-grading where the canonical basis is homogeneous and prove that in various cases the graded identites are just the ordinary ones. If the grading is a non-canonical grading obtained as a quotient grading of the natural $\mathbb{Z}_2\times\mathbb{Z}_2$-grading we exhibit a basis for the graded identities.
\end{abstract}

\section{Introduction}

In \cite{KrakowskiRegev} Krakovski and Regev proved that, for fields of characteristic zero, the polynomial identities of the Grassmann algebra are consequences of the identity $[x_1,x_2,x_3]$. In \cite{GiambrunoKoshlukov} it was proved that the last result holds for infinite fields. The identities of $E\otimes E$ were considered in \cite{Popov} where A. Popov proved that the polynomials $[x_1,x_2,[x_3,x_4],x_5]$ and $[[x_1,x_2]^2,x_2]$ form a basis for $T(E\otimes E)$ if the base field is of characteristic zero. We remark that determining a basis for $E\otimes E$ over fields of positive characteristic is still an open problem. 

The problem of determining a basis for the polynomial identities of a given algebra is hard and was solved in a few cases only, and it is usefull to consider variations of the notion of polynomial identity such as graded polynomial identities. In \cite{Kemmer} Kemer develops a structural theory of T-ideals used in his solution to the Specht problem in characteristic zero, and the concept of $\mathbb{Z}_2$-graded identity was a key component. Graded identities have various applications in PI-Theory and shortly afterwards became an object of independent studies. 

In \cite{BahturinGiambrunoRiely} the authors proved that if $G$ is a finite group and $A$ is a $G$-graded algebra then $A$ satisfies a polynomial identity if and only if the component $A_0$ is a PI-algebra. Moreover graded identities provide a usefull way to prove that two algebras satisfy the same polynomial identities. In \cite{DiVincenzo} O. M. Di Vincenzo determined a basis for the $\mathbb{Z}_2$-graded identities of $M_{11}(E)$, where $E$ denotes the  Grassmann algebra of a vector space of infinite dimension over a field of characteristic zero. This results allowed him to give a new proof that over fields of characteristic zero this algebra is PI-equivalent to $E\otimes E$. This equivalence is a consequence of Kemmer's Tensor Product Theorem which was first proved by Kemmer using his structural theory of T-ideals, we mention that another proof of this theorem was given by A. Regev in \cite{Regev} using graded identities and in \cite{AzevedoFidelisKoshlukov} the methods developed by Regev were used to prove that a multilinear version of the Tensor Product Theorem holds when the characteristic of the field is $p>0$. 

Therefore describing the gradings and the corresponding graded identities of a given algebra is an important problem. Bases for the graded identities for the matrix algebra $M_n(K)$ were determined in \cite{Vasilovsky1}, \cite{Vasilovsky2} for its natural $\mathbb{Z}$ and $\mathbb{Z}_n$ gradings for fields $\mathbb{K}$ of characteristic zero. In \cite{Azevedo2}, \cite{Azevedo1} it was proved that these results hold for infinite fields. Graded identities for elementary gradings of $M_n(K)$ were studied in \cite{drenskyebahturin} for fields of characteristic zero and in \cite{Diogo} for infinite fields. The $\mathbb{Z}_2$-graded identities of $E$, for any grading where the generating vector space is graded, were described in \cite{VivianeDiVincenzo} for fields of characteristic zero and in \cite{Centrone} for infinite fields. We mention that the graded codimentions were calculated in \cite{Viviane}. The graded identities of the algebra $E\otimes E$, with its canonical grading, over infinite fields were studied in \cite{PlamenAzevedo}. 

In this paper we consider the $\mathbb{Z}_2$-graded identities for the tensor square of the Grassmann algebra $E\otimes E$ for $\mathbb{Z}_2$-gradings where each of the two $E$ components have a grading where the generating space is homogeneous. We find a basis for the identities if the grading is a non-canonical grading obtained as a quotient grading of the natural $\mathbb{Z}_2\times\mathbb{Z}_2$-grading and prove that in various cases the graded identities are just the ordinary ones.

\section{Preliminaries}

In this paper $\mathbb{K}$ denotes an infinite field of characteristic different from $2$, all algebras and vector spaces will be considered over $\mathbb{K}$. 

Let $V$ be a vector space with coutable basis $\{e_1,e_2,\dots, e_n,\dots\}$, the \textbf{Grassmann algebra} of $V$, wich we denote by $E$, is the associative algebra with basis (as a vector space) consisting of $1$ and the products $e_{i_1}e_{i_2}\dots e_{i_k}$, where $i_1<i_2<\dots<i_k$, and multiplication determined by $e_ie_j+e_je_i=0$.

A $G$-grading on an associative algebra $A$, where $G$ is a group, is a decomposition $A=\oplus_{g \in G}A_g$ such that $A_{g}A_{h}\subset A_{gh}$. We say that an element in $A_g$ is homogeneous of degree $g$. If $H$ is a normal subgroup of $G$ the quotient $G/H$-grading is defined as $A=\oplus_{\overline{g}\in G/H}A_{\overline{g}}$, where $A_{\overline{g}}=\oplus_{h \in H} A_{gh}$.

In \cite{Centrone} the author considers gradings on $E$ such that the vector space $V$ is a graded subspace, these are described in terms of a mapping \linebreak[4] $||\cdot||:\{e_1,e_2\dots,e_n,\dots\} \rightarrow \mathbb{Z}_2$. Given such a mapping we define \linebreak[4] $||e_{i_1}e_{i_2}\dots e_{i_k}||=||e_{i_1}||+\dots ||e_{i_k}||$ and this determines a grading in $E$. The gradings induced by the maps $||\cdot||_{k^{*}}$, $||\cdot||_{\infty}$ and $||\cdot||_{k}$ defined by

\vspace{0.5cm}

$||e_i||_{k^{*}}=\left\{ \begin{array}{l}
	
	1 \mbox{ for } i=1,2,\dots,k\\
	0 \mbox{ otherwise,}
\end{array}\right.$

\vspace{0.5cm}

$||e_i||_{\infty}=\left\{ \begin{array}{l}
	
	1 \mbox{ for } i \mbox{ even }\\
	0 \mbox{ otherwise,}
\end{array}\right.$

\vspace{0.5cm}

$||e_i||_{k}=\left\{ \begin{array}{l}
	
	0 \mbox{ for } i=1,2,\dots,k\\
	1 \mbox{ otherwise,}
\end{array}\right.$
\vspace{0.5cm}

\noindent were considered. We denote by $E_{k^{*}}$, $E_{\infty}$ and $E_{k}$ respectively the Grassmann algebra with each of these gradings.

Given $\mathbb{Z}_2$-graded algebras $R$ and $Q$ their tensor product $A=R\otimes Q$ has a natural $\mathbb{Z}_2$-grading $A=A_0\oplus A_1$, where 
$A_0=(R_0\otimes Q_0) \oplus (R_1\otimes Q_1)$ and \linebreak[4] $A_1=R_0\otimes Q_1 \oplus R_1\otimes Q_0$. We denote by $g(a)$ the degree of a homogeneus element $a$ in this grading. 

The algebra $A$ also has a natural $\mathbb{Z}_2\times\mathbb{Z}_2$-grading, we define $A_{(i,j)}=A_i\otimes A_j$, $i,j \in \mathbb{Z}_2$. If $A$ denotes the algebra $E\otimes E$ with its natural $\mathbb{Z}_2\times \mathbb{Z}_2$-grading and $H$ is a proper subgroup of $G=\mathbb{Z}_2\times \mathbb{Z}_2$ then $G/H\cong\mathbb{Z}_2$ and there are three possible quotient gradings: $E_0\otimes E_0$ which is the canonical $\mathbb{Z}_2$-grading and two non-canonical isomorphic gradings: 
$E_{0^{*}}\otimes E_0$ and $E_0\otimes E_{0^{*}}$. In this paper we consider the following gradings:

\begin{enumerate}
\item[(i)] $E_{0^{*}}\otimes E_0$ and $E_0\otimes E_{0^{*}}$ - these are the non-canonical quotient gradings and are studied in Section 3 where a finite basis is determined;
\item[(ii)] $E_{\infty}\otimes E_{k}$, $E_{\infty}\otimes E_{j^{*}}$, $E_{\infty}\otimes E_{\infty}$, $E_{k}\otimes E_{\infty}$ and $E_{j^{*}}\otimes E_{\infty}$ - these are the gradings with an $E_{\infty}$ component and are studied in Section 4. We prove that the graded identities are essentially the ordinary ones,
\item[(iii)] $E_{k^{*}}\otimes E_{j^{*}}$ - the graded identities are studied in Section 5, in this case the description of the graded identities is done modulo the ordinary ones.
\end{enumerate}

Let $Y$ and $Z$ denote two disjoint countable sets of noncommutative variables and $\mathbb{K}\langle X \rangle$ denote the free associative algebra freely generated by $X$, where $X=Y\cup Z$. This algebra is $2$-graded if we impose that the variables $Y$ have degree $0$ and the variables $Z$ have degree $1$. We say that a polynomial $f(x_1,\dots, x_n) \in \mathbb{K}\langle X \rangle $ is a \textbf{polynomial identity} in an associative algebra $A$ if $f(a_1,\dots, a_n)=0$ for any $a_1,\dots, a_n$ in $A$. Moreover if $A=A_0\oplus A_1$ is a $\mathbb{Z}_2$-graded algebra a polynomial $f(y_1,\dots, y_m,z_1,\dots, z_n) \in \mathbb{K}\langle X \rangle$ is a \textbf{$\mathbb{Z}_2$-graded polynomial identity} (or $2$-graded polynomial identity) if $f(a_1,a_2,\dots,a_m,b_1,\dots,b_n)=0$ for any $a_1,a_2,\dots, a_n \in A_0$ and any $b_1, \dots, b_n \in A_1$. We denote by $T(A)$ the set of all polynomial identities of the algebra $A$ and by $T_2(A)$ the set of all 
$2$-graded polynomial identities of the graded algebra $A=A_0\otimes A_1$. It is well known that the set $T(A)$ (resp. $T_2(A)$) is an ideal of $\mathbb{K}\langle X \rangle$ stable under all endomorphisms (resp. graded endomorphisms) of $\mathbb{K}\langle X \rangle$, we call such ideals $T$-ideals (resp. $T_2$-ideals).

Let $[x_1,x_2]=x_1x_2-x_2x_1$ be the commutator of $x_1$ and $x_2$, we define inductively the higer commutators \[[x_1,x_2,\dots, x_n]=[[x_1,\dots, x_{n-1}],x_n].\] We denote by $\circ$ the Jordan product, i. e., given $u,v$ in an associative algebra we have $u\circ v = uv+vu$. In this paper $B(Y,Z)$ is the subalgebra  of $\langle X \rangle$ generated by $Z$ and by the non-trivial commutators. The elements of $B(Y,Z)$ are called \textbf{$Y$-proper polynomials}. It is well known that, since our field is infinite, all the 2-graded identities of a 2-graded algebra $A$ follow from the $Y$-proper multihomogeneous ones.

\subsection{The algebra $E\otimes E$}

We denote by $A$ the algebra $E\otimes E$. Let $C=\{e_i\otimes 1, 1\otimes e_i | i=1,2,\dots, j=1,2,\dots\}$ we totally order $C$ by imposing that $e_i\otimes 1 < 1\otimes e_j$ for every $i,j$, moreover if $m<n$ then $e_m\otimes 1 < e_n \otimes 1$ and $1\otimes e_m < 1\otimes e_n$. Then given any $b \in \textbf{\textsl{B}}$, where $\textbf{\textsl{B}}$ is the canonical basis of $A$, there exists a unique sequence $c_1<c_2< \dots< c_k$ of elements in $C$ such that $b=c_1\dots c_k$.

\begin{definition}
Let $b=c_1\dots c_k$, where $b \in \textbf{\textsl{B}}$ and $c_i \in C$. We say that the set $\{c_1,\dots,c_k\}$ is the support of $b$ and denote it by $supp(b)$.
\end{definition}

If $b_1, b_2 \in \textbf{\textsl{B}}$ then $b_1b_2\neq 0$ if and only if $b_1$ and $b_2$ have disjoint supports, moreover in this case $b_1b_2 \in \textbf{\textsl{B}}$ or $-b_1b_2 \in\textbf{\textsl{B}}$, i. e., $b_1b_2=\pm b$, where $b\in \textbf{\textsl{B}}$. Clearly the sing is determined by the degree of the elements in $supp(b_1)$ and in $supp(b_2)$ in the cannonical $\mathbb{Z}_2\times \mathbb{Z}_2$-grading of $E\otimes E$. We generalize this in the following remark.

\begin{remark}\label{r1}
Given a monomial $m(x_1,\dots,x_n) \in \mathbb{K}\langle X \rangle$ and $b_1,\dots, b_n \in \textbf{\textsl{B}}$ it follows that $m(b_1,\dots,b_n)\neq 0$ if and only if the elements in $b_1,\dots, b_n$ have disjoint supports. Moreover if $m(b_1,\dots,b_n)\neq 0$ then $m(b_1,\dots,b_n)=\pm b$, where $b \in \textbf{\textsl{B}}$, and the sign is determined by the monomial and the degrees of the elements in the sets $supp(b_1),\dots,supp(b_n)$ with respect to the canonical $\mathbb{Z}_2\times \mathbb{Z}_2$-grading of $E\otimes E$.
\end{remark}

\begin{proposition}\label{prop1}
Let $f(x_1,\dots, x_n)$ be a multihomogeneous polynomial, and let $a_l=\sum_{k=1}^{v_l} \alpha_k^l b_k^l$, $l=1,2,\dots, n$, where $b_k^l \in \textbf{\textsl{B}}$, 
$\alpha_k^l \in K$, and the elements $b_k^l$ have pairwise disjoint supports. If $c_k^l \in \textbf{\textsl{B}}$ are elements in the canonical basis of $A$ with $g(b_k^l)=g(c_k^l)$ and $a_l^{\prime}=\sum_{k=1}^{n_l} \alpha_k^l c_k^l$ then $f(a_1,\dots,a_n)=0$ implies $f(a_1^{\prime},\dots, a_n^{\prime})=0$.
\end{proposition}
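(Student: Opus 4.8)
The plan is to reduce the statement to the multilinear-looking structure that Remark~\ref{r1} provides. First I would use the fact that $f$ is multihomogeneous to linearize in each variable $x_l$: writing $a_l = \sum_k \alpha_k^l b_k^l$, the value $f(a_1,\dots,a_n)$ expands as a sum over all choices of one basis monomial $b_{k_1}^1,\dots,b_{k_n}^n$, one from each $a_l$, with the coefficient $\alpha_{k_1}^1\cdots\alpha_{k_n}^n$ times the multihomogeneous component of $f$ evaluated on that tuple; the key point is that because $f$ is multihomogeneous of a fixed degree in each variable, the substitution $x_l \mapsto b_{k_l}^l$ picks out exactly one graded component, so no cancellation between distinct monomials of the same multidegree occurs for free — each term sits in its own ``place''. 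More precisely, I would fix a basis $\{m_j\}$ of the multihomogeneous component of $\mathbb{K}\langle X\rangle$ of the relevant multidegree in which $f = \sum_j \gamma_j m_j$, and expand $f(a_1,\dots,a_n) = \sum_j \gamma_j \sum_{(k_1,\dots,k_n)} \alpha_{k_1}^1\cdots\alpha_{k_n}^n\, m_j(b_{k_1}^1,\dots,b_{k_n}^n)$.

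Next I would invoke Remark~\ref{r1}: each $m_j(b_{k_1}^1,\dots,b_{k_n}^n)$ is either $0$ (when the supports overlap) or $\pm b$ for a single $b \in \textbf{\textsl{B}}$, and crucially the sign depends only on the monomial $m_j$ and on the $\mathbb{Z}_2\times\mathbb{Z}_2$-degrees of the elements of the supports $supp(b_{k_l}^l)$, not on the supports themselves. Since the hypothesis requires $g(b_k^l) = g(c_k^l)$ — equality of $\mathbb{Z}_2$-degree in the tensor-product grading — I need to check that passing from the $b_k^l$ to the $c_k^l$ preserves exactly the data that controls both (a) whether a product vanishes and (b) the resulting sign. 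For (a), the $c_k^l$ are assumed to have pairwise disjoint supports just like the $b_k^l$, and within a fixed $l$ the monomials $c_1^l,\dots,c_{v_l}^l$ lie in distinct homogeneous components so they are linearly independent; I would note that which products $m_j(c_{k_1}^1,\dots,c_{k_n}^n)$ vanish is determined solely by the disjointness pattern, which is identical to that of the $b$'s by hypothesis. For (b) the sign is governed by the $\mathbb{Z}_2\times\mathbb{Z}_2$-degrees, and here lies the one subtlety: $g(b)=g(c)$ only asserts agreement of the \emph{total} $\mathbb{Z}_2$-degree (i.e.\ the image under $\mathbb{Z}_2\times\mathbb{Z}_2 \to \mathbb{Z}_2$, $(i,j)\mapsto i+j$), not of the finer $\mathbb{Z}_2\times\mathbb{Z}_2$-degree. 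I expect this to be the main obstacle, and I would resolve it by observing that the sign incurred when multiplying basis elements of $E\otimes E$ in fact depends only on the total $\mathbb{Z}_2$-degrees of the homogeneous pieces being commuted past one another — because the grading on $E\otimes E$ underlying the sign rule is the super-structure, i.e.\ $e_i\otimes 1$ and $1\otimes e_j$ all anticommute pairwise regardless of tensor slot, so reordering $c_1<\dots<c_k$ inside a product of basis vectors of $A$ produces a sign $(-1)^{(\text{number of transpositions of odd elements})}$ governed precisely by $g(\cdot)$.

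With that observation in hand the argument closes cleanly. For each fixed tuple of multidegrees — equivalently, for each fixed choice of which $b_k^l$ (hence which $c_k^l$) we select from each $a_l$ — the products $m_j(b_{k_1}^1,\dots,b_{k_n}^n)$ and $m_j(c_{k_1}^1,\dots,c_{k_n}^n)$ carry the same sign $\varepsilon_{j,(k_1,\dots,k_n)}$ and the same vanishing behavior. Now group the expansion of $f(a_1,\dots,a_n)$ by the resulting basis element $b\in\textbf{\textsl{B}}$: its coefficient is a $\mathbb{K}$-linear combination $\sum \pm\gamma_j\alpha_{k_1}^1\cdots\alpha_{k_n}^n$ over the relevant index set. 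Since the $b_k^l$ have pairwise disjoint supports, distinct nonzero terms $m_j(b_{k_1}^1,\dots,b_{k_n}^n)$ landing on the same $b$ must come from the same tuple $(k_1,\dots,k_n)$ (the supports are recovered from $b$ together with which variable each $c\in C$ came from, forced by disjointness), so each such coefficient is a fixed linear combination of products of the $\alpha$'s. The same bookkeeping applied to $f(a_1',\dots,a_n')$ yields the \emph{same} coefficients attached to the \emph{same} formal index data. Hence $f(a_1,\dots,a_n)=0$ forces every one of those coefficient expressions to vanish, and therefore $f(a_1',\dots,a_n')=0$ as well. I would present this last step as: the map sending the support-data of the $b$'s to that of the $c$'s sets up a degree-preserving, disjointness-preserving, sign-preserving bijection between the nonzero monomial contributions to the two evaluations, so the two evaluations are ``the same'' linear combination of basis elements of $A$ up to relabeling, and in particular one vanishes iff the other does — actually we need only the one-directional implication, which is immediate.
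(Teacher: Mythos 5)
Your overall strategy (expand by multilinearity, match coefficients via Remark \ref{r1}, use linear independence of the basis elements arising on the $b$-side) is the same as the paper's, but the step you yourself flag as ``the one subtlety'' is resolved by a false claim, and in fact needs no resolution at all. You read $g(b_k^l)=g(c_k^l)$ as equality of total $\mathbb{Z}_2$-degrees and then assert that the sign rule in $E\otimes E$ depends only on these total degrees because ``$e_i\otimes 1$ and $1\otimes e_j$ all anticommute pairwise regardless of tensor slot.'' That is not true: $E\otimes E$ is the ordinary tensor product of algebras, so $(e_i\otimes 1)(1\otimes e_j)=e_i\otimes e_j=(1\otimes e_j)(e_i\otimes 1)$; mixed generators \emph{commute}. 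Two homogeneous basis elements of $\mathbb{Z}_2\times\mathbb{Z}_2$-degrees $(i_1,j_1)$ and $(i_2,j_2)$ satisfy $b_1b_2=(-1)^{i_1i_2+j_1j_2}b_2b_1$, and this sign is not a function of $i_1+j_1$ and $i_2+j_2$ alone. Under your total-degree reading the proposition is actually false: take $f=[x_1,x_2]$, $a_1=e_1\otimes 1$, $a_2=1\otimes e_1$ (disjoint supports, $f(a_1,a_2)=0$) and $c_1^1=e_1\otimes 1$, $c_1^2=e_2\otimes 1$; all four elements are odd in the quotient $\mathbb{Z}_2$-grading, yet $[c_1^1,c_1^2]=2\,e_1e_2\otimes 1\neq 0$. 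The hypothesis must therefore be read as equality of $\mathbb{Z}_2\times\mathbb{Z}_2$-degrees --- this is the $g$ of Remark \ref{r1} and of Lemmas \ref{l3} and \ref{l4}, which is exactly how the proposition is invoked later --- and with that reading the signs of $m_i(b_{k_1}^1,\dots)$ and $m_i(c_{k_1}^1,\dots)$ agree directly by Remark \ref{r1}, with nothing left to patch.

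A secondary slip: you state that the $c_k^l$ ``are assumed to have pairwise disjoint supports just like the $b_k^l$'' and build a vanishing- and sign-preserving bijection on that basis. No such assumption is made; the $c_k^l$ are arbitrary basis elements of the prescribed degrees, and a product $m_i(c_{k_1}^1,\dots)$ may vanish even when the corresponding $m_i(b_{k_1}^1,\dots)$ does not. This does not endanger the one-directional implication --- disjointness of the supports of the $b_k^l$ makes the elements $\pm m_i(b_{k_1}^1,\dots)$ distinct members of the canonical basis, hence linearly independent, so $f(a_1,\dots,a_n)=0$ forces every coefficient polynomial $P_i(\alpha)$ to vanish, and then $f(a_1',\dots,a_n')=\sum_i P_i(\alpha)\,m_i(c_{k_1}^1,\dots)=0$ whatever the individual $m_i(c_{k_1}^1,\dots)$ happen to be --- but your symmetric ``bijection between nonzero contributions'' framing should be replaced by this asymmetric argument, which is the one the paper gives.
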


\begin{proof}
It follows from the equalities $g(b_k^l)=g(c_k^l)$ that we may write \[f(a_1,\dots,a_n)=\sum_{i=1}^{\lambda}P_i(\alpha_1^{1},\dots, \alpha_n^{v_n})m_i(b_1^{1},\dots,b_n^{v_n}),\] and 
\[f(a_1^{\prime},\dots,a_n^{\prime})=\sum_{i=1}^{\lambda}P_i(\alpha_1^{1},\dots, \alpha_n^{v_n})m_i(c_1^{1},\dots,c_n^{v_n}),\] where $m_1,\dots, m_{\lambda}$ are monomials in 
$\mathbb{K}\langle X \rangle$ and $P_1,\dots,P_{\lambda}$ are polynomials in commuting variables. Since the elements $b_k^l$ have pairwise disjoint supports it follows from the previous remark that $\pm m_1(b_1^{1},\dots,b_n^{v_n}), \dots \pm m_{\lambda}(b_1^{1},\dots,b_n^{v_n})$ are distinct elements of $\in \textbf{\textsl{B}}$, therefore $P_i(\alpha_1^{1},\dots, \alpha_n^{v_n})=0$, $i=1,\dots, \lambda$, hence $f(a_1^{\prime},\dots, a_n^{\prime})=0$.
\end{proof}

\section{The non-canonical quotient gradings}

We denote by $A=A_{(0,0)}\oplus A_{(1,0)}\oplus A_{(0,1)} \oplus A_{(1,1)}$ the canonical $\mathbb{Z}_2\times \mathbb{Z}_2$-grading of the algebra $E\otimes E$. There are three possible quotient gradings: 

\begin{enumerate}

\item[(1)] $A_0=A_{(0,0)}\oplus A_{(1,1)}$ and $A_1=A_{(1,0)}\oplus A_{(0,1)}$;
\item[(2)] $A_0=A_{(0,0)}\oplus A_{(1,0)}$ and $A_1=A_{(0,1)}\oplus A_{(1,1)}$;
\item[(3)] $A_0=A_{(0,0)}\oplus A_{(0,1)}$ and $A_1=A_{(1,0)}\oplus A_{(1,1)}$.

\end{enumerate}

The grading in $(1)$ is the usual $\mathbb{Z}_2$-grading of $E\otimes E$ and the graded polynomial identities in this case were described in \cite{PlamenAzevedo}. In this section we find a basis for the graded identities of $A$ with the quotient grading in $(2)$. Note that the results in this section also hold for the grading in $(3)$. 

Denote by $I$ the ideal generated by the polynomials

\begin{equation}
[y_1,y_2,x_3],
\label{1}
\end{equation}

where $x_3$ is a a variable of even or odd degree, i. e., $x_3=y_3$ or $x_3=z_3$. 

\begin{equation}
[y_1,z_2,y_3]
\label{2}
\end{equation}

\begin{equation}
[y_1,z_2]\circ z_3
\label{3}
\end{equation}

\begin{equation}
[z_1\circ z_2, z_3]
\label{4}
\end{equation}

\begin{equation}
(z_1\circ z_2)(z_3\circ z_4) - (z_1\circ z_3)(z_2\circ z_4)
\label{5}
\end{equation}

\begin{equation}
[x_1,y_2][y_3,x_4]+[x_1,y_3][y_2,x_4]
\label{6}
\end{equation}

\begin{equation}
[y_1,z_2](z_3\circ z_4)-[y_1,z_3](z_2\circ z_4)
\label{7}
\end{equation}

\begin{lemma}\label{lemma6}
The graded identities from (\ref{1}) to (\ref{7}) hold for the algebra $E_{0^{*}}\otimes E_0$. In other words $I \subset T_2(E_{0^{*}}\otimes E_0)$.
\end{lemma}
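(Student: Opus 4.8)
The plan is to verify each of the seven generating polynomials (\ref{1})--(\ref{7}) is a graded identity of $E_{0^{*}}\otimes E_0$ by evaluating on homogeneous basis elements and using the combinatorial description of products in $\textbf{\textsl{B}}$. Recall the grading: $A_0 = A_{(0,0)}\oplus A_{(1,0)}$ consists of tensors $u\otimes v$ with $v\in E_0$ (the degree in the \emph{second} tensor slot is $0$), while $A_1 = A_{(0,1)}\oplus A_{(1,1)}$ consists of $u\otimes v$ with $v\in E_1$. So the parity of a homogeneous element equals the parity of its second-slot component, regardless of the first slot. I would first record two elementary facts to be used throughout: (a) if $a\in A_0$ and $b$ is arbitrary homogeneous, then the first-slot parts commute or anticommute depending only on first-slot parities, but the second-slot part of $a$ is \emph{even}, so $ab = \pm ba$ with the sign governed entirely by the first-slot parities; (b) if $a$ has even second slot and even first slot (i.e. $a\in A_{(0,0)}$, central-like behavior), $a$ is central in $A$; if $a\in A_{(1,0)}$ then $a$ commutes with everything in the second slot and the sign in $ab=\pm ba$ is $(-1)^{(\text{first-slot parity of }b)}$.

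With these in hand the checks split by the types of the variables. For (\ref{1}): $y_1,y_2\in A_0$ have even second slots; writing $y_i = \sum (\text{first})\otimes(\text{even})$, the commutator $[y_1,y_2]$ lands in $A_{(0,0)}$ because the second slots multiply to even and the first-slot commutator kills the $A_{(1,0)}$-vs-$A_{(1,0)}$ cross terms that would produce first-slot parity $0$ anyway — more carefully, $[y_1,y_2]$ is a sum of $(\text{commutator or anticommutator of first slots})\otimes(\text{product of even second slots})$, and in $E$ the bracket $[E,E]\subseteq E_0$ while $E_0\cdot E_0\subseteq E_0$, and one checks the total second slot is even and the total element is central, so $[y_1,y_2,x_3]=0$. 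For (\ref{2}): $[y_1,z_2]$ has first slot a sum lying in $E$ and second slot odd$\cdot$even$=$odd times a first-slot commutator; the key point is that $[y_1,z_2]$ ends up with first-slot part in $E_0$ (since one commutes the even-second-slot $y_1$ past $z_2$ picking up only first-slot signs, and the resulting element has \emph{even} first slot), hence it commutes with $y_3$ whose first slot is even—giving $[y_1,z_2,y_3]=0$. Identity (\ref{3}) is the "dual" statement: $[y_1,z_2]$ being even in the first slot anticommutes with nothing bad, so its Jordan product with $z_3$ vanishes because the relevant sign is $-1$. Identities (\ref{4}) and (\ref{5}) concern Jordan products $z_i\circ z_j$: since each $z_i$ has odd second slot, $z_i\circ z_j$ has even second slot, and a short computation shows $z_i\circ z_j$ lands in the center-like component $A_{(0,0)}$ (the odd$\times$odd second slots give even, and the first-slot symmetrization $u\circ u' $ with appropriate signs lands in $E_0$), so $z_i\circ z_j$ is central, immediately yielding (\ref{4}); for (\ref{5}) one additionally uses that $(z_1\circ z_2)(z_3\circ z_4)$ is, up to the central scalars produced, symmetric enough in the indices $2\leftrightarrow 3$ to match $(z_1\circ z_3)(z_2\circ z_4)$. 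Identities (\ref{6}) and (\ref{7}) are the analogous symmetrization relations among products of two commutators of mixed type; (\ref{6}) uses that $[x_1,y_2]$ and $[x_1,y_3]$ produce a first-slot commutator with $y$ (even first slot) that, combined with the remaining factor, is symmetric under $2\leftrightarrow 3$, and (\ref{7}) combines the central nature of $z\circ z$ from (\ref{5}) with the commutator relation.

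The cleanest way to organize all of this is via Proposition \ref{prop1}: it suffices to verify each identity when every variable is replaced by a single basis element $b\in\textbf{\textsl{B}}$ of the appropriate parity, since an arbitrary homogeneous evaluation is a linear combination of such and Proposition \ref{prop1} (together with Remark \ref{r1}) reduces the general multihomogeneous case to the monomial-in-basis-elements case with pairwise disjoint supports. On basis elements the products become $\pm$(basis element) or $0$ by Remark \ref{r1}, so every identity reduces to a finite sign bookkeeping: one shows the two terms of each binomial identity (or the single term, for (\ref{1})--(\ref{4})) produce either $0$ (disjoint-support failure forces it) or equal basis elements with \emph{equal} signs (for the binomials) resp.\ the defining bracket/Jordan relation forces cancellation (for the monomial ones). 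The signs are exactly the ones dictated by the $\mathbb{Z}_2\times\mathbb{Z}_2$-degrees as in Remark \ref{r1}, and the constraint imposed by the quotient grading — that all variables $z_i$ have odd \emph{second} coordinate and all $y_i$ have even second coordinate — is what makes the second-coordinate signs collapse and the first-coordinate signs match up symmetrically.

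The main obstacle I anticipate is not any single identity but the sign bookkeeping in (\ref{5}), (\ref{6}), and (\ref{7}): these are genuine binomial identities where one must check that after expanding $z_i\circ z_j$ (or $[x_i,y_j]$) into basis elements and multiplying, the two sides agree \emph{termwise} including the $\pm$ coming from reordering supports across the $E\otimes E$ tensor factors. A naive expansion has many cross terms, so the efficient route is to first prove the structural lemma that $z\circ z'$ is central (lies in $A_{(0,0)}$) — which trivializes the placement of these factors and reduces (\ref{5}) to commutativity of the central scalars, and (\ref{7}) to (\ref{3}) plus that centrality — and to prove that $[x,y]$ always has even first coordinate, which controls (\ref{2}), (\ref{3}), (\ref{6}). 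Once those two structural facts are in place, each of the seven identities is a one- or two-line verification.
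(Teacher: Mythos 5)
The paper gives no argument here at all --- its ``proof'' of Lemma \ref{lemma6} is the single sentence that the verification consists of straightforward computations, which are omitted. Your plan supplies exactly the computation the paper skips, and the strategy is the right one: reduce to evaluations on basis elements of $\textbf{\textsl{B}}$ with disjoint supports via multilinearity/multihomogeneity and Remark \ref{r1}, note that two such elements satisfy $b_1b_2=(-1)^{|u_1||u_2|+|v_1||v_2|}b_2b_1$ where $g(b_i)=(|u_i|,|v_i|)$, and then observe the structural facts that each nonzero $[y_i,y_j]$ and $z_i\circ z_j$ lands in the central component $A_{(0,0)}$, while each nonzero $[y_i,z_j]$ and $[x_i,y_j]$ lands in a component with even first coordinate. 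That organization does make each of the seven identities a one-line sign check, so the proposal is essentially correct.

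Two sign-level statements in your sketch are wrong as written, though neither damages the method. First, in your treatment of (\ref{2}) you say $[y_1,z_2]$ commutes with $y_3$ ``whose first slot is even'': $y_3$ ranges over all of $A_0=A_{(0,0)}\oplus A_{(1,0)}$, so its first slot need not be even. The correct reason is that the nonzero evaluations of $[y_1,z_2]$ have degree $(0,1)$ while $y_3$ has even \emph{second} slot, so the commutation exponent is $0\cdot|u_3|+1\cdot 0=0$. Second, for (\ref{6}) you claim the expression is \emph{symmetric} under $2\leftrightarrow 3$; since (\ref{6}) is a \emph{sum}, symmetry would give $2[x_1,y_2][y_3,x_4]\neq 0$ rather than $0$. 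What actually happens is that the nonvanishing terms force $y_2,y_3$ to evaluate in $A_{(1,0)}$, whose elements \emph{anticommute}, so $[b_1,b_2][b_3,b_4]=-[b_1,b_3][b_2,b_4]$; the antisymmetry is precisely why (\ref{6}) carries a plus sign while (\ref{5}) and (\ref{7}), where the swapped factors have degree $(1,1)$ and hence commute, carry minus signs. With those two corrections your outline compiles into a complete proof of the lemma.
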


\begin{proof}
The proof consists of straitghtforward (and easy) computations, so we omit it.
\end{proof}

\begin{lemma}\label{lemma8}
The polynomial $[z_1 \circ z_2,y_3]$ lies in $I$.
\end{lemma}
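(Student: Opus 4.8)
The goal is to show $[z_1\circ z_2, y_3]\in I$, where $I$ is generated by the polynomials (\ref{1})--(\ref{7}). The plan is to express the commutator $[z_1\circ z_2, y_3]$ as a combination of the generators of $I$, modulo the identities we already have available. First I would expand using the Leibniz rule for the commutator over a product: since $z_1\circ z_2 = z_1 z_2 + z_2 z_1$, we have
\[
[z_1\circ z_2, y_3] = [z_1 z_2, y_3] + [z_2 z_1, y_3] = z_1[z_2,y_3] + [z_1,y_3]z_2 + z_2[z_1,y_3] + [z_2,y_3]z_1.
\]
Regrouping the four terms, this equals $\bigl(z_1[z_2,y_3] + [z_2,y_3]z_1\bigr) + \bigl(z_2[z_1,y_3] + [z_1,y_3]z_2\bigr) = z_1\circ[z_2,y_3] + z_2\circ[z_1,y_3]$. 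Now $[z_2,y_3] = -[y_3,z_2]$, and $[y_3,z_2]$ is an odd element (a commutator of an odd and an even variable behaves like an odd variable), so by identity (\ref{3}), namely $[y_1,z_2]\circ z_3 \equiv 0$, each Jordan product $z_i\circ[y_j,z_k]$ lies in $I$. Hence $[z_1\circ z_2,y_3]\in I$.

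The subtlety I anticipate is purely bookkeeping: identity (\ref{3}) is stated with the even variable in the first slot of the inner commutator and with a specific pattern of $y$'s and $z$'s, so I must check that the substitutions $y_1\mapsto y_3$, $z_2\mapsto z_1$ or $z_2$, and $z_3\mapsto z_2$ or $z_1$ genuinely produce the terms I need, up to sign and up to the antisymmetry $[y,z]=-[z,y]$. Since $I$ is a $T_2$-ideal, it is closed under graded substitutions and under the commutativity of the Jordan product, so $z\circ[y,z'] = [y,z']\circ z$ and the sign changes are harmless. The only genuine identity invoked is (\ref{3}); identities (\ref{1})--(\ref{2}) and (\ref{4})--(\ref{7}) are not needed here.

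If one prefers to avoid even this mild rewriting, an alternative is to start from the Jacobi-type identity $[z_1\circ z_2, y_3]$ and substitute directly: the expansion above shows $[z_1\circ z_2,y_3] = z_1\circ[z_2,y_3] + z_2\circ[z_1,y_3]$ is an algebraic identity in the free algebra (no identities used), and then a single application of (\ref{3}) to each summand finishes. I expect the main obstacle — such as it is — to be nothing more than getting the signs right in the final line and confirming that the graded-substitution closure of $I$ covers the needed instances of (\ref{3}); there is no structural difficulty.
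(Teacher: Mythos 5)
Your proof is correct and follows exactly the paper's argument: the same decomposition $[z_1\circ z_2,y_3]=z_1\circ[z_2,y_3]+z_2\circ[z_1,y_3]$ followed by an application of identity (\ref{3}) to each summand. The extra bookkeeping about signs and graded substitutions is fine but not needed beyond what the paper already takes for granted.
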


\begin{proof}
We have \[[z_1 \circ z_2,y_3]=z_1\circ [z_2,y_3]+z_2\circ [z_1,y_3],\] and the result follows from (\ref{3}) since each summand in the right side of the above equality lies in $I$.
\end{proof}

We denote by $R$ the algebra $\mathbb{K}\langle Y \cup Z \rangle / I$, we denote by $y_i$ (resp. $z_i$) the image of $y_i \in Y$ (resp. $z_i \in Z$) in the quotient $R$.

\begin{lemma}
In the relatively free algebra $R$ every polynomial $f(z_1,\dots, z_n)$ is a linear combination of the polynomials 
\begin{equation}
(z_{i_1}\circ z_{i_2})\dots(z_{i_{2k-1}}\circ z_{i_{2k}})z_{j_1}\dots z_{j_l},
\label{z}
\end{equation}
where $i_1\leq i_2 \leq \dots \leq i_{2k}$ and $j_1\leq j_2 \leq \dots \leq j_l$.
\end{lemma}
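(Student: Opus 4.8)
The plan is to argue that the relations defining $I$ let us rewrite any product of the $z_i$'s into the claimed normal form, working inside the relatively free algebra $R$. Since $\mathbb K\langle Z\rangle$ is spanned by monomials $z_{i_1}\cdots z_{i_n}$, it suffices to show that each monomial is, modulo $I$, a linear combination of the polynomials in (\ref{z}). I would induct on the length $n$ of the monomial, and for fixed $n$ on some measure of ``disorder'' (for instance the number of inversions relative to the order on indices, together with the number of Jordan factors already extracted from the left).

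First I would record the basic moves available. From (\ref{4}), $[z_1\circ z_2,z_3]=0$ in $R$, so any Jordan product $z_i\circ z_j$ is \emph{central} among the $z$'s; hence a factor $(z_i\circ z_j)$ may be slid freely past any other $z$ and, in particular, all such factors can be collected at the front. From (\ref{5}), $(z_1\circ z_2)(z_3\circ z_4)=(z_1\circ z_3)(z_2\circ z_4)$ in $R$; combined with the obvious symmetry $z_i\circ z_j=z_j\circ z_i$, this is exactly the relation needed to sort the indices appearing in the block of Jordan factors into nondecreasing order $i_1\le i_2\le\dots\le i_{2k}$ (these moves generate all permutations of the multiset of $2k$ indices, since transpositions of adjacent Jordan blocks and swaps within a block suffice). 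Finally, for the plain tail $z_{j_1}\cdots z_{j_l}$ I would use the identity $z_az_b=z_b z_a+(z_a\circ z_b)-2z_bz_a$… more cleanly: $z_az_b+z_bz_a=z_a\circ z_b$, so $z_bz_a=(z_a\circ z_b)-z_az_b$. Thus swapping two adjacent $z$'s in the tail replaces the monomial by $(z_a\circ z_b)$ times a shorter word plus $\pm$ the swapped monomial; the shorter-word term is handled by the induction on length (after pulling the new Jordan factor to the front using centrality), and iterating the swap sorts the tail indices into $j_1\le\dots\le j_l$ at the cost of finitely many shorter terms.

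Putting these together: given an arbitrary monomial in the $z_i$'s, I repeatedly apply $z_bz_a=(z_a\circ z_b)-z_az_b$ to move it toward sorted order; each application either decreases the inversion count of the bare word or splits off a Jordan factor (which by (\ref{4}) migrates to the front and by (\ref{5}) gets absorbed into the sorted Jordan block) while leaving a strictly shorter word to which the induction hypothesis applies. When the bare word is fully sorted we are left with a front block of Jordan products, which (\ref{5}) lets us reorder into nondecreasing index order, plus a sorted tail. This yields precisely the spanning set (\ref{z}).

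The main obstacle is bookkeeping the induction so that it genuinely terminates: each ``swap'' move simultaneously produces a shorter term and permutes the current word, so one must choose the well-founded order carefully — e.g. order monomials first by length, then (within a fixed length) by number of unsorted inversions in the bare tail — and check that every rewrite step strictly decreases this pair lexicographically. One should also be slightly careful that the factors $z_i\circ z_j$ really are central with respect to \emph{all} the $z$'s and not just some, but that is exactly (\ref{4}) (and Lemma~\ref{lemma8} is not needed here, since no $y$'s occur). No genuinely hard step is expected; the content is entirely in organizing the reduction.
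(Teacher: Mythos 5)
Your proposal is correct and follows essentially the same route as the paper: rewrite $z_bz_a=(z_a\circ z_b)-z_az_b$, use (\ref{4}) to pull the (central) Jordan factors to the front, and use (\ref{5}) together with the symmetry $z_i\circ z_j=z_j\circ z_i$ to sort the indices in the Jordan block; the paper merely packages the induction as a minimal-counterexample argument on a lexicographic ordering of monomials rather than your explicit (length, inversion count) well-founded order.
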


\begin{proof}
Let $W$ be the subspace of $R$ generated by the polynomials (\ref{z}) and $U$ be the subspace of $R$ of the polynomials $f(z_1,\dots, z_n)$. Clearly $W\subset U$ we will prove the reverse inclusion by contradiction. We order the monomials $z_{j_1}\dots z_{j_l}$ by the lexicographic order, let $m=z_{j_1}\dots z_{j_l}$ be the least monomial that does not lie in 
$W$, in this case $j_1\leq \dots \leq j_{a}$, and $j_a>j_{a+1}$ where $1\leq a <l$, therefore \[z_{j_1}\dots z_{j_l}=z_{j_1}\dots z_{j_{a-1}}(z_{j_{a+1}}\circ z_{j_a}-z_{j_{a+1}}z_{j_a})\dots z_{j_l}.\] It follows from (\ref{4}) that \[m=(z_{j_{a+1}}\circ z_{j_a})(z_{j_1}\dots z_{j_{a-1}}\dots z_{j_l})-z_{j_1}\dots z_{j_{a-1}}z_{j_{a+1}}z_{j_a}\dots z_{j_l},\] since $z_{j_1}\dots z_{j_{a-1}}\dots z_{j_l}<m$ and $z_{j_1}\dots z_{j_{a-1}}z_{j_{a+1}}z_{j_a}\dots z_{j_l}<m$ it follows that $z_{j_1}\dots z_{j_{a-1}}\dots z_{j_l}$ and $z_{j_1}\dots z_{j_{a-1}}z_{j_{a+1}}z_{j_a}\dots z_{j_l}$ lie in $W$, but using the identity (\ref{5}) we conclude that $(z_{j_{a+1}}\circ z_{j_a})(z_{j_1}\dots z_{j_{a-1}}\dots z_{j_l})$ lies in $W$, a contradiction.
\end{proof}

We consider the polynomials,

\begin{equation*}
h_y(y_{i_1},\dots,y_{i_{2m}})=[y_{i_1},y_{i_2}]\dots[y_{i_{2m-1}},y_{i_{2m}}],
\end{equation*}

\begin{equation*}
h_{y,z}(y_{i_{2m+1}},\dots,y_{i_{2m+n}},z_{j_1},\dots,z_{j_n})=[y_{i_{2m+1}},z_{j_1}]\dots[y_{i_{2m+n}},z_{j_n}],
\end{equation*}
and
\begin{equation*}
h_z(z_{j_{n+1}},\dots,z_{j_{n+2p}})=(z_{j_{n+1}}\circ z_{j_{n+2}})\dots (z_{j_{n+2p-1}}\circ z_{j_{n+2p}}).
\end{equation*}

Define 
\begin{equation}
h(y_{i_1},\dots,y_{i_{2m+n}},z_{j_1},\dots, z_{j_{n+2p}})=h_y h_{y,z} h_z.
\label{h}
\end{equation}
Let $K=\{k_1,\dots,k_q\}$ be a set of $q$ natural numbers $k_1<\dots<k_q$ we denote by $g_K$ the product of the monomial $z_{k_1}\dots z_{k_q}$ by $h(y_{i_1},\dots,y_{i_{2m+n}},z_{j_1},\dots, z_{j_{n+2p}})$.

\begin{remark}
Clearly using identities (\ref{5}), (\ref{6}) and (\ref{7}) we may assume, multiplying by $-1$ if necessary, that in the polynomials $g_K$ above we have $i_1< i_2< \dots < i_{2m+n}$, $j_1\leq \dots\leq j_{n+2p}$, $k_1< \dots <k_{q}$. 
\end{remark}

\begin{lemma}\label{R}
In the relatively free algebra $R$ every $Y$-proper polynomial is a linear combination of the polynomials 
\[g_K=h(y_{i_1},\dots,y_{i_{2m+n}},z_{j_1},\dots, z_{j_{n+2p}})z_{k_1}\dots z_{k_q},\]
where $i_1< i_2< \dots < i_{2m+n}$, $j_1\leq \dots\leq j_{n+2p}$, $k_1< \dots <k_{q}$.

\end{lemma}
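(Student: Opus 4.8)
The plan is to show that any $Y$-proper element of $R$ can be rewritten, modulo $I$, as a linear combination of the $g_K$'s. Since the field is infinite, it suffices to treat multihomogeneous $Y$-proper polynomials, i.e.\ linear combinations of products of the generators of $B(Y,Z)$, namely the variables $z_i$ and the higher commutators in $Y\cup Z$. First I would reduce the commutator factors: using the Jacobi identity together with the consequence $[y_1,y_2,x_3]\in I$ from (\ref{1}) (with $x_3$ of either degree) and $[y_1,z_2,y_3]\in I$ from (\ref{2}), every long commutator is, modulo $I$, a linear combination of commutators of length $2$ of the form $[y,y]$, $[y,z]$, $[z,z]$, possibly multiplied on the right by further $z$'s; the relation $[z_1\circ z_2,z_3]\in I$ from (\ref{4}) and Lemma~\ref{lemma8} handle the $[z,z]$ case, since $z_1z_2=\tfrac12(z_1\circ z_2)+\tfrac12[z_1,z_2]$ converts a product of two $z$'s (a Jordan factor plus a commutator factor) and then $[z,z]$-commutators get absorbed.

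Next I would sort the three kinds of degree-$2$ commutators into the block form $h_y\,h_{y,z}\,h_z$ of (\ref{h}). Moving a $[y,y]$ past a $[y,z]$ or a $[z,z]\sim(z\circ z)$ factor, or a $[y,z]$ past a $(z\circ z)$ factor, produces a correction term that is a commutator of one factor with the other; by the identities already used (\ref{1})--(\ref{4}) these corrections are higher commutators that vanish in $R$ or reduce to shorter strings, so the blocks can indeed be collected in the order $h_y$, then $h_{y,z}$, then $h_z$, with any leftover single $z$-variables pushed to the far right to form the tail $z_{k_1}\dots z_{k_q}$; here one uses (\ref{3}), $[y_1,z_2]\circ z_3\in I$, and its companion (\ref{7}) to move a loose $z$ past an $h_{y,z}$-factor at the cost of terms that are again of lower complexity. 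Finally, within each block I would put the indices in the required order: (\ref{6}) gives the antisymmetry needed to make the $y$-indices inside $h_y$ and $h_{y,z}$ strictly increasing (and to pair them off, killing repeated $y$'s since $[y,y]=0$), (\ref{5}) gives the symmetry making the Jordan factors in $h_z$ sortable so that $j_1\le\cdots\le j_{n+2p}$, (\ref{7}) synchronizes the sorting of the $z$'s appearing in $h_{y,z}$ with those in $h_z$, and the tail $z_{k_1}\dots z_{k_q}$ is sorted using (\ref{4}) together with (\ref{5}) exactly as in the previous lemma.

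The argument is naturally organized as an induction on a complexity measure of the monomial — say the pair (total commutator-length, number of inversions) ordered lexicographically, or a multidegree-compatible term order on the generators of $B(Y,Z)$ — so that every rewriting step strictly decreases the measure and the base case is already a $g_K$. I expect the main obstacle to be bookkeeping in the sorting step: one must check that commuting two degree-$2$ blocks past each other, or sliding a loose $z$ across an $h_{y,z}$-factor, always yields correction terms of \emph{strictly smaller} complexity in the chosen order, rather than terms of the same complexity that could cause the induction to loop. Making the correction terms genuinely smaller is where identities (\ref{1})--(\ref{4}) (producing higher commutators, which are $0$ in $R$ or collapse) and the ``exchange'' identities (\ref{5}), (\ref{6}), (\ref{7}) (which trade one ordered monomial for another plus lower-order junk) must be used in combination; verifying this systematically, over all the cases of which type of factor meets which, is the real content, and it parallels — but is more intricate than — the $z$-only computation carried out in the preceding lemma.
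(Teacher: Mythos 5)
Your proposal is correct and uses the same ingredients in the same way as the paper: identities (\ref{1})--(\ref{4}) and Lemma~\ref{lemma8} give that $[y_a,y_b]$ and $z_a\circ z_b$ are central and $[y_a,z_b]$ commutes with even and anticommutes with odd variables, while (\ref{5}), (\ref{6}), (\ref{7}) sort the indices. The paper merely packages the rewriting more compactly --- it shows the span $U$ of the $g_K$ is a subalgebra closed under $[\,\cdot\,,x_i]$ and hence contains $Z$ and all higher commutators --- which makes the termination bookkeeping you worry about unnecessary, since the blocks genuinely commute or anticommute with no lower-order correction terms.
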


\begin{proof}
Let $U$ be the subspace of $R$ generated by the polynomials $g_K$. From the identity (\ref{1}) we conclude that comutators $[y_a,y_b]$ lie in the center of $R$. It follows from 
(\ref{2}) and (\ref{3}) that the commutators $[y_a, z_b]$ commute with the even indeterminates and anticomute with the odd indeterminates. It is clear, using the identity (\ref{4}) and Lemma \ref{lemma8}, that the elements $z_a\circ z_b$ also lie in the center of $R$. 

Therefore the product of two polynomials of the form $g_K$ is a polynomial of the same type with the ordering of the indices following from identities (\ref{5}), (\ref{7}) and Lemma \ref{z}, thus $U$ is a subalgebra of $R$. Moreover if $f$ is a polynomial of the form $g_K$ then $[f,x_i]$ lies in $U$ for any indeterminate $x_i$, hence every commutator $[y_1,x_2,\dots, x_n]$ lies in $U$.  
\end{proof}

Recall that $A_0=A_{(0,0)} \oplus A_{(1,0)}$ and $A_1=A_{(0,1)} \oplus A_{(1,1)}$ and let $\varphi:\mathbb{K}\langle Y\cup Z \rangle \rightarrow A$ be a graded homomorphism. Given $y \in Y$ we denote by $\varphi(y)_0$ and $\varphi(y)_1$ the projections of $\varphi(y)$ in $A_{(0,0)}$ and $A_{(1,0)}$ respectively. Analogously one defines $\varphi(z)_0$ and $\varphi(z)_1$ for $z \in Z$.

\begin{lemma}\label{lemma10}
Let $\varphi:\mathbb{K}\langle Y\cup Z \rangle \rightarrow E_{0^{*}}\otimes E_0$ be a graded homomorphism and let $h(y_{i_1},\dots,y_{i_{2m+n}},z_{j_1},\dots, z_{j_{n+2p}})$ be the polynomial defined in (\ref{h}). We have \[\varphi(h)=\pm2^{m+n}\left(\prod_{k=1}^{2m+n} \varphi(y_{i_k})_1\right)\cdot \left(\prod_{l=1}^{n+2p}\varphi(z_{j_l})_1\right).\]
\end{lemma}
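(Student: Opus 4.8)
The plan is to evaluate each commutator and Jordan-product factor in $h$ separately on the homogeneous components, using that in the grading of $E_{0^*}\otimes E_0$ the image of an even variable $\varphi(y)$ lives in $A_{(0,0)}\oplus A_{(1,0)}$ and the image of an odd variable $\varphi(z)$ lives in $A_{(0,1)}\oplus A_{(1,1)}$. I would first record the key computational facts in the algebra $A=E\otimes E$: writing $\varphi(y_{i_k})=\varphi(y_{i_k})_0+\varphi(y_{i_k})_1$ with $\varphi(y_{i_k})_0\in A_{(0,0)}$, $\varphi(y_{i_k})_1\in A_{(1,0)}$, and similarly $\varphi(z_{j_l})=\varphi(z_{j_l})_0+\varphi(z_{j_l})_1$ with $\varphi(z_{j_l})_0\in A_{(0,1)}$, $\varphi(z_{j_l})_1\in A_{(1,1)}$, one checks the bracket relations in $E\otimes E$: the $A_{(0,0)}$-part of an even image is central, the $A_{(0,1)}$-part of an odd image is central, while the "first-Grassmann-factor-odd" parts $\varphi(y)_1\in A_{(1,0)}$ and $\varphi(z)_1\in A_{(1,1)}$ behave like elements of the first Grassmann algebra $E$ (they anticommute among themselves in the first tensor slot) and commute past everything in the second slot as dictated by the $\mathbb{Z}_2\times\mathbb{Z}_2$-grading. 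The upshot I want is: $[\varphi(y_a),\varphi(y_b)]=[\varphi(y_a)_1,\varphi(y_b)_1]=2\,\varphi(y_a)_1\varphi(y_b)_1$, since the $A_{(1,0)}$-parts anticommute; $[\varphi(y_a),\varphi(z_b)]=[\varphi(y_a)_1,\varphi(z_b)_1]=2\,\varphi(y_a)_1\varphi(z_b)_1$, again by anticommutativity of the odd-first-factor parts; and $\varphi(z_a)\circ\varphi(z_b)=\varphi(z_a)_1\circ\varphi(z_b)_1=2\,\varphi(z_a)_1\varphi(z_b)_1$, since the $A_{(0,1)}$-parts are central (so their Jordan square-type contributions appear but... ) — more precisely the cross terms with an $A_{(0,1)}$-factor cancel in the Jordan product because such elements commute, leaving $2\varphi(z_a)_1\varphi(z_b)_1$ only when one checks the signs carefully.

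With these three identities in hand, I would substitute into $h=h_y h_{y,z} h_z$. Each of the $m$ brackets in $h_y$ contributes a factor $2$ and replaces $[\,\cdot\,,\cdot\,]$ by a product of two $\varphi(y_{i_k})_1$'s; each of the $n$ brackets in $h_{y,z}$ contributes a factor $2$ and a product $\varphi(y_{i_k})_1\varphi(z_{j_l})_1$; each of the $p$ Jordan products in $h_z$ contributes a factor $2$ and a product $\varphi(z_{j_l})_1\varphi(z_{j_{l'}})_1$. Counting: $m$ factors of $2$ from $h_y$, $n$ from $h_{y,z}$, and $p$ from $h_z$ — but $h_z$ has $p$ Jordan products involving $2p$ of the $z$'s, while the exponent claimed is $m+n$, so I must be careful: the Jordan factors in $h_z$ each give a $2$, contributing $2^p$, not $2^{n+2p}$; I should double-check the exact statement, but taking it at face value the bookkeeping must yield total power $2^{m+n}$, which means the $h_z$ contribution of $2^p$ is being absorbed differently — most likely the intended reading is that $h_z$ contributes exactly the product $\prod\varphi(z_{j_l})_1$ over its $2p$ arguments together with $2^p$, and combined with the claim one reconciles $2^{m+n}$ by a convention on how many Jordan blocks there are; in the write-up I would simply track every factor of $2$ explicitly and state the resulting power, trusting the lemma's normalization. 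After collecting, every $\varphi(y_{i_k})_1$ (there are $2m+n$ of them) and every $\varphi(z_{j_l})_1$ (there are $n+2p$ of them) appears exactly once, all living in the first-factor-odd part, which anticommutes; so reordering them into the fixed order $\left(\prod_{k=1}^{2m+n}\varphi(y_{i_k})_1\right)\left(\prod_{l=1}^{n+2p}\varphi(z_{j_l})_1\right)$ only introduces a global sign $\pm1$, and the products of $A_{(0,1)}$- and $A_{(0,0)}$-parts that were central drop out or combine into that sign as well.

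The main obstacle I expect is the sign and central-part bookkeeping in the Jordan and mixed brackets: one must verify that the terms involving the central components $\varphi(y)_0\in A_{(0,0)}$ and $\varphi(z)_0\in A_{(0,1)}$ genuinely cancel (in commutators) or duplicate (in Jordan products) rather than surviving, and that when an even image is multiplied by an odd image the relative $\mathbb{Z}_2\times\mathbb{Z}_2$-degrees produce the stated signs consistently across all $2m+2n+2p$ factors. I would handle this by proving the three boxed bracket/Jordan identities above as a short preliminary sublemma — each a direct computation in $E\otimes E$ using that $A_{(i,j)}A_{(k,l)}=\pm A_{(k,l)}A_{(i,j)}$ with the sign $(-1)^{ik}$ (coming only from the first Grassmann factor) — and then the substitution into $h$ is a purely mechanical, sign-tracked product that collapses to the asserted formula.
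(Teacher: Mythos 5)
Your overall strategy coincides with the paper's: evaluate each factor of $h=h_yh_{y,z}h_z$ separately via the commutation relations of the $\mathbb{Z}_2\times\mathbb{Z}_2$-homogeneous components, reducing everything to the three identities $[\varphi(y_a),\varphi(y_b)]=2\varphi(y_a)_1\varphi(y_b)_1$, $[\varphi(y_a),\varphi(z_b)]=2\varphi(y_a)_1\varphi(z_b)_1$ and $\varphi(z_a)\circ\varphi(z_b)=2\varphi(z_a)_1\varphi(z_b)_1$, and then collecting a global sign. Those three target identities are correct, but your justification of the third one rests on two false premises, so the "short preliminary sublemma" as you describe it would fail. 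First, the components $\varphi(z)_0\in A_{(0,1)}$ are \emph{not} central in $E\otimes E$: an element of $A_{(i,j)}$ commutes with one of $A_{(k,l)}$ up to the sign $(-1)^{ik+jl}$ --- both Grassmann factors contribute, not only the first as your rule $(-1)^{ik}$ asserts. Consequently $A_{(0,1)}$ anticommutes with $A_{(0,1)}$ and with $A_{(1,1)}$, while commuting with $A_{(0,0)}$ and $A_{(1,0)}$. Second, the cross terms in $\varphi(z_a)\circ\varphi(z_b)$ do not disappear "because such elements commute": if $\varphi(z_a)_0$ really commuted with the components of $\varphi(z_b)$, the Jordan product would produce surviving terms $2\varphi(z_a)_0\varphi(z_b)_0+2\varphi(z_a)_0\varphi(z_b)_1+\dots$. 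They vanish precisely because $\varphi(z_a)_0$ \emph{anti}commutes with every homogeneous component of a $z$-image, so $\varphi(z_a)_0\circ v=0$ for all such $v$, leaving only $\varphi(z_a)_1\circ\varphi(z_b)_1=2\varphi(z_a)_1\varphi(z_b)_1$ (the $A_{(1,1)}$-parts commute). This is the one place where the second tensor slot genuinely matters, and it must be argued with the correct sign rule.

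On the power of $2$: your honest count is right --- each of the $p$ Jordan factors also contributes a factor $2$, so the scalar is $2^{m+n+p}$, not $2^{m+n}$. The paper's own proof silently drops the $2^p$ in its formula for $\varphi(h_z)$. Since everything downstream (Lemmas \ref{lemma11} and \ref{lemma13} and the theorem) only uses that this scalar is nonzero and $\operatorname{char}\mathbb{K}\neq 2$, the discrepancy is harmless; but you should state the constant you actually compute rather than contorting the bookkeeping to match the displayed normalization.
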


\begin{proof}
Since $\varphi(y_{i_k})_0$ lies in the center of $A$ and the elements $\varphi(y_{i_k})_1$ anticommute it follows that 
\begin{equation}\label{p1}
\varphi([y_{i_1},y_{i_2}]\dots[y_{i_{2m-1}},y_{i_{2m}}])=2^{m}\prod_{k=1}^{2m}\varphi(y_{i_k})_1.
\end{equation}
The elements $\varphi(y_{i_k})_1$ commute with $\varphi(z_{j_l})_0$ and anticommute with $\varphi(z_{j_l})_1$ hence 
\begin{equation}\label{p2}
\varphi([y_{i_{2m+1}},z_{j_1}]\dots[y_{i_{2m+n}},z_{j_n}])=\pm2^n \left(\prod_{k=2m+1}^{2m+n}\varphi(y_{i_k})_1\right)\cdot \left(\prod_{l=1}^{n+2p}\varphi(z_{j_l})_1\right).
\end{equation}

Moreover the elements $\varphi(z_{j_k})_0$ and $\varphi(z_{j_l})$ anticommute, therefore
\begin{equation}\label{p3}
\varphi((z_{j_{n+1}}\circ z_{j_{n+2}})\dots (z_{j_{n+2p-1}}\circ z_{j_{n+2p}}))= \left(\prod_{k=n+1}^{n+2p}\varphi(z_{j_k})_1\right).
\end{equation}

Finally multipying (\ref{p1}), (\ref{p2}) and (\ref{p3}) yelds the result.
\end{proof}

\begin{lemma}\label{lemma11}
Let $h$ be the polynomial in the previous lemma and $\varphi:\mathbb{K}\langle Y\cup Z \rangle \rightarrow A$ be a graded homomorphism such that 
$\varphi(z_{j_k})_1=\sum_{l=1}^{n_k} \alpha_k b_l^k$, where $b_k$ is in the cannonical basis $\textbf{\textsl{B}}$ of $E_{0^{*}}\otimes E_0$ and $\alpha_k \in K$. If $deg_{z_{j_k}}>n_k$ for some $k$ then $\varphi(h)=0$, moreover if $deg_{z_{j_k}}=n_k$ for every $k$ then \[\varphi(h)=\pm2^{m+n}\left(\prod_{k=1}^{n+2p}\alpha_k n_k!\right)\cdot\left(\prod_{k=1}^{2m+n} \varphi(y_{i_k})_1\right)\cdot \left(\prod_{k=1}^{n+2p} \left(\prod_{l=1}^{n_k}  b_l^k\right) \right).\]
\end{lemma}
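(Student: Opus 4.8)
The plan is to build on Lemma~\ref{lemma10}, which already expresses $\varphi(h)$ as $\pm 2^{m+n}$ times the product of the factors $\varphi(y_{i_k})_1$ followed by the product of the factors $\varphi(z_{j_l})_1$. The only new ingredient needed here is to understand what happens when we substitute $\varphi(z_{j_k})_1=\sum_{l=1}^{n_k}\alpha_k b_l^k$ and expand the product $\prod_{l=1}^{n+2p}\varphi(z_{j_l})_1$ by multilinearity. Note that among the $n+2p$ factors $\varphi(z_{j_l})_1$ appearing in Lemma~\ref{lemma10}, the variable $z_{j_k}$ occurs exactly $\deg_{z_{j_k}}$ times (this is where the multihomogeneity bookkeeping enters: the indices $j_1,\dots,j_{n+2p}$ list each odd variable with multiplicity equal to its degree in $h$). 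So expanding, $\prod_l \varphi(z_{j_l})_1$ becomes a sum of products of basis elements $b_{l_1}^{k}b_{l_2}^{k}\cdots$, one factor $b^k$ for each occurrence of $z_{j_k}$.

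First I would invoke Remark~\ref{r1}: a product of elements of $\textbf{\textsl{B}}$ is nonzero only if the supports are pairwise disjoint, and the elements $b_1^k,\dots,b_{n_k}^k$ (being the distinct basis components of a single homogeneous element $\varphi(z_{j_k})_1$) all have the \emph{same} degree in the $\mathbb{Z}_2\times\mathbb{Z}_2$-grading, in particular each is odd in the $E_{0^{*}}$ component — hence each $b_l^k$ has nonempty support and two distinct $b_l^k, b_{l'}^k$ need not have disjoint supports, but more to the point a repeated factor $b_l^k b_l^k$ is automatically zero since it fails disjointness. Thus if $\deg_{z_{j_k}}>n_k$ for some $k$, then in every term of the expansion some basis element $b_l^k$ must be repeated (pigeonhole), so every term vanishes and $\varphi(h)=0$. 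This gives the first assertion.

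Next, assume $\deg_{z_{j_k}}=n_k$ for every $k$. Then the surviving terms in the expansion of the $z$-part are exactly those in which, for each $k$, the $n_k$ occurrences of $z_{j_k}$ receive the $n_k$ distinct basis elements $b_1^k,\dots,b_{n_k}^k$ in some order — there are $n_k!$ such orderings. Each such ordered product equals $\pm b_1^k b_2^k\cdots b_{n_k}^k$ up to a sign coming from reordering anticommuting basis elements; here I would observe that since all $b_l^k$ share the same degree, transposing two of them introduces the same sign regardless of which two, and — this is the one point requiring a small check — when we also carry the coefficient $\alpha_k$ out of each factor we get $\alpha_k^{n_k}$, and the $n_k!$ reorderings combine (after absorbing signs consistently, as is standard for the ``$\circ$''-type normal forms used throughout Section~3) to contribute the factor $n_k!$ claimed in the statement; I would reference the analogous computation in \cite{PlamenAzevedo} or simply redo the sign analysis for products of same-degree anticommuting elements. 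Collecting the $2^{m+n}$ from Lemma~\ref{lemma10}, the $\prod_k \alpha_k n_k!$ from the expansion, the unchanged $\prod_{k=1}^{2m+n}\varphi(y_{i_k})_1$, and the resulting $\prod_{k=1}^{n+2p}\bigl(\prod_{l=1}^{n_k} b_l^k\bigr)$ yields the displayed formula, with the overall sign absorbed into the $\pm$.

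The main obstacle is the sign and coefficient bookkeeping in the last step: one must be careful that the $n_k!$ permutations of the basis components of $\varphi(z_{j_k})_1$ really do all contribute with a consistent sign so that they add rather than cancel, and that the interleaving of components belonging to different variables $z_{j_k}, z_{j_{k'}}$ can be untangled (again using disjointness of supports from Remark~\ref{r1} and the fact that the $\mathbb{Z}_2\times\mathbb{Z}_2$-degrees are fixed by multihomogeneity) into the clean product $\prod_k \prod_l b_l^k$ up to a single global sign. Everything else is a direct substitution into Lemma~\ref{lemma10} together with multilinear expansion.
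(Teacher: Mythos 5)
Your overall strategy is the same as the paper's (expand $\prod_l\varphi(z_{j_l})_1$ multilinearly, kill the terms with a repeated basis element via $(b_l^k)^2=0$ and the pigeonhole principle, and count the $n_k!$ surviving orderings), and the first assertion ($\varphi(h)=0$ when $\deg_{z_{j_k}}>n_k$) is handled correctly. But there is a genuine gap at the central point of the second assertion, and you flag it yourself without closing it: you describe the $b_l^k$ as ``anticommuting basis elements'' and then hope that ``the $n_k!$ reorderings combine \dots to contribute the factor $n_k!$.'' If the $b_l^k$ really anticommuted pairwise, this would be false: one would have $b_{\sigma(1)}^k\cdots b_{\sigma(n_k)}^k=\mathrm{sgn}(\sigma)\,b_1^k\cdots b_{n_k}^k$, the sum over all $\sigma$ would vanish for $n_k\geq 2$, and the lemma would fail. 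So the sign analysis is not a ``small check'' to be absorbed; it is the whole content of the step, and the argument as written does not establish it.

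The missing observation, which is exactly what the paper's proof uses, is that the $b_l^k$ pairwise \emph{commute}. Indeed $\varphi(z_{j_k})_1$ is by definition the projection of $\varphi(z_{j_k})$ onto $A_{(1,1)}=E_1\otimes E_1$, and for two basis elements $u_1\otimes v_1,\ u_2\otimes v_2$ with all of $u_i,v_i$ of odd length one gets $(u_1\otimes v_1)(u_2\otimes v_2)=(-1)^{|u_1||u_2|+|v_1||v_2|}(u_2\otimes v_2)(u_1\otimes v_1)=+(u_2\otimes v_2)(u_1\otimes v_1)$. Hence all $n_k!$ orderings give literally the same element $\prod_{l=1}^{n_k}\alpha_k b_l^k$, i.e.\ $\bigl(\sum_{l=1}^{n_k}\alpha_k b_l^k\bigr)^{n_k}=n_k!\prod_{l=1}^{n_k}\alpha_k b_l^k$, with no signs to track at all; combined with Lemma~\ref{lemma10} this yields the stated formula. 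With this one fact inserted, your proof is correct and coincides with the paper's.
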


\begin{proof}
We have $\prod_{l=1}^{n+2p}\varphi(z_{j_l})_1=\prod_{j_k}(\sum_{l=1}^{n_k} \alpha_k b_l^k)^{d_k}$, where $d_k$ is the degree of $z_{j_k}$ and the last product runs over all $j_k$ such that $z_{j_k}$ appears in $h$. Since the $b_l^k$ commute and $(b_l^k)^2=0$ it follows that 
$(\sum_{l=1}^{n_k} \alpha_k b_l^k)^{n_k}= (n_k!)\prod_{l=1}^{n_k} \alpha_k b_l^k$ and $(\sum_{l=1}^{n_k} \alpha_k b_l^k)^{d_k}=0$ if $d_k>n_k$, hence the result follows from the previous lemma.
\end{proof}

If $\mathbb{K}$ is a field of characteristic $p>2$ let $I_p$ denote the $T_2$-ideal generated by the identities (\ref{1})-(\ref{7}) together with the identities

\begin{equation}\label{zp}
[y_1,z_1]\dots[y_{2k-2},z_1](z_2\circ z_1)z_1^{2n}
\end{equation}
and the identities
\begin{equation}\label{p}
[y_1,z_1]\dots[y_{2k-1},z_1]z_1^{2n},
\end{equation}
where $2n+2k-1=p$, $n=0,1,\dots, \frac{p-1}{2}$.

We denote by $R_p$ the algebra $\mathbb{K}\langle Y \cup Z \rangle/I_p$.

\begin{lemma}\label{lemma13}
If $\mathbb{K}$ is a field of characteristic $p>2$ then $I_p\subset T_2(E_{0^{*}}\otimes E_0)$.
\end{lemma}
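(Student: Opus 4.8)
By Lemma \ref{lemma6} we already know that identities (\ref{1})--(\ref{7}) hold in $E_{0^{*}}\otimes E_0$, so it remains to verify that the two extra families (\ref{zp}) and (\ref{p}) are graded identities of $E_{0^{*}}\otimes E_0$ when $\operatorname{char}\mathbb{K}=p>2$. The plan is to evaluate these polynomials on an arbitrary graded homomorphism $\varphi$ using Lemma \ref{lemma11}, which computes $\varphi$ on products of the shape $h_y h_{y,z} h_z$. Note that (\ref{p}) is exactly $h$ with $m=0$, $p=0$, $n=2k-1$ and a single odd variable $z_1$ substituted for all the $z_{j_l}$, and with $z_1^{2n}$ as the trailing monomial $z_{k_1}\dots z_{k_q}$; likewise (\ref{zp}) is $h$ with $m=0$, $p=1$, $n=2k-2$, again all $z$'s equal to $z_1$, times $z_1^{2n}$. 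In both cases the variable $z_1$ appears with total degree $2k-1+2n=p$ (resp. $2k-2+2+2n=p$), so $\deg_{z_1}=p$.

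First I would reduce, by multihomogeneity and Proposition \ref{prop1}, to the case where $\varphi(z_1)_0=0$ and $\varphi(z_1)=\varphi(z_1)_1=\sum_{l=1}^{n_1}\alpha_l b_l$ is a linear combination of basis elements $b_l\in\textbf{\textsl{B}}$ with pairwise disjoint supports — this is legitimate because the whole polynomial is $Y$-proper in $z_1$ in the sense that $z_1$ occurs only inside commutators $[y_j,z_1]$, inside a Jordan product $z_2\circ z_1$, or as a bare factor, and the argument of Lemma \ref{lemma11} only used that the $b_l$ commute pairwise and square to zero. The key point is the total degree: since $\deg_{z_1}=p$ and $\operatorname{char}\mathbb{K}=p$, the computation in Lemma \ref{lemma11} produces the scalar factor $n_1!$ (coming from $(\sum_l\alpha_l b_l)^{n_1}=n_1!\prod_l\alpha_l b_l$) only when $\deg_{z_1}=n_1$, and gives $0$ when $\deg_{z_1}>n_1$. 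So the only surviving case is $n_1=p$, and then the scalar is $\pm 2^{\,0+n}\big(p!\cdot(\text{stuff})\big)$ — but $p!\equiv 0\pmod p$. Hence $\varphi(h)=0$ in every case, which shows (\ref{zp}) and (\ref{p}) are identities; therefore $I_p\subset T_2(E_{0^{*}}\otimes E_0)$.

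The one subtlety I would be careful about is that Lemma \ref{lemma11} as stated assumes all the $z$-variables in $h$ are \emph{distinct} variables $z_{j_1},\dots,z_{j_{n+2p}}$, whereas in (\ref{zp}) and (\ref{p}) a single variable $z_1$ is repeated. This is harmless: one may first regard (\ref{p}) (say) as obtained from $h(y_{i_1},\dots,y_{i_{n}},z_{j_1},\dots,z_{j_{n}})z_{k_1}\dots z_{k_{2n}}$ by the graded substitution $z_{j_1}=\dots=z_{j_n}=z_{k_1}=\dots=z_{k_{2n}}=z_1$, and since $T_2$-ideals are closed under graded substitutions, it suffices that $h$ itself, evaluated \emph{after} this identification, vanishes; the identification merges the $n_{j_l}$ into a single $n_1$ and the degree bookkeeping collapses to $\deg_{z_1}=p$ exactly as above. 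Alternatively, and more cleanly, one reruns the short computation of Lemma \ref{lemma11} directly: write $\varphi(z_1)_1=\sum_{l=1}^{n_1}\alpha_l b_l$, use that $\varphi(y_{i_j})_1$ anticommutes with each $b_l$ and that $\varphi(z_1)_0$ contributes nothing inside the commutators $[y_{i_j},z_1]$ (it is central) nor, modulo reordering, inside $z_2\circ z_1$, so that $\varphi(h)=\pm 2^{n}\big(\prod_j\varphi(y_{i_j})_1\big)\cdot\varphi(z_1)_1^{\,p}$ up to the $z_2$ factor in the (\ref{zp}) case, and then $\varphi(z_1)_1^{\,p}=\big(\sum_l\alpha_l b_l\big)^p$ is either $0$ (if $n_1<p$) or $p!\prod_l\alpha_l b_l=0$ in characteristic $p$ (if $n_1=p$), and $0$ as well if $n_1>p$. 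Either way $\varphi(h)=0$. The main obstacle, such as it is, is purely notational — tracking that the repeated variable $z_1$ accumulates total degree exactly $p$ in each of the two families — and once that is pinned down the characteristic-$p$ vanishing of $p!$ does all the work.
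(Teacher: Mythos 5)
Your proposal is correct and follows essentially the same route as the paper: the paper also invokes Lemma \ref{lemma6} for (\ref{1})--(\ref{7}) and then evaluates (\ref{zp}) and (\ref{p}) via the computation of Lemma \ref{lemma10}, reducing everything to $\bigl(\sum_l \alpha_l b_l\bigr)^p = \sum_{l_1<\dots<l_p} p!\,b_{l_1}\cdots b_{l_p} = 0$ in characteristic $p$. Your extra care about the repeated variable $z_1$ (versus the distinct $z_{j_l}$ in Lemma \ref{lemma11}) is a point the paper passes over silently, but it does not change the argument.
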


\begin{proof}
It follows from Lemma \ref{lemma10} that for any graded homomorphism $\varphi:\mathbb{K}\langle Y\cup Z \rangle \rightarrow A$ we have \[\varphi([y_1,z_1]\dots[y_{2k-1},z_1]z_1^{2n})=\pm2^{2k-1}\left(\prod_{i=1}^{2k-1} \varphi(y_{i})_1\right)\cdot \left(\varphi(z_{j_l})_1\right)^p,\] where $2n+2k-1=p$. We have $\varphi(z_{j_l})_1=\sum_{l=1}^n \alpha_l b_l$, where $b_l \in \beta$, in this case the elements $b_l$ commute and $(b_l)^2=0$ for each $l$, hence \[(\sum_{l=1}^n \alpha_l b_l)^p=\sum_{\stackrel{l_i \neq l_j}{1\leq i<j\leq p}}b_{l_1}\dots b_{l_p}=\sum_{l_1<\dots< l_p}(p!)b_{l_1}\dots b_{l_p}=0.\]Analogously $\varphi(z_1^{p+1})=0$,  therefore $[y_1,z_1]\dots[y_k,z_1]z_1^{2n}$ and $z_1^{p+1}$ are graded identities for $E_{0^{*}}\otimes E_0$ and now the result follows from Lemma \ref{lemma6}.
\end{proof}

\begin{corollary}\label{corolp}
In the algebra $R_p$ every $Y$-proper polynomial is a linear combination of the polynomials \[g=h(y_{i_1},\dots,y_{i_{2m+n}},z_{j_1},\dots, z_{j_{n+2p}})z_{k_1}\dots z_{k_q},\]
where $i_1< i_2< \dots < i_{2m+n}$, $j_1\leq \dots\leq j_{n+2p}$, $k_1< \dots <k_{q}$ and $deg_{z_l}h<p$, $1\leq l \leq n+2p$.
\end{corollary}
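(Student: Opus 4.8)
The plan is to combine Lemma \ref{R}, which already gives a spanning set of $Y$-proper polynomials $g_K$ in the relatively free algebra $R$, with the extra identities (\ref{zp}) and (\ref{p}) that were added to pass from $R$ to $R_p$. Since $R_p$ is a further quotient of $R$, the images of the polynomials $g_K$ still span the $Y$-proper part of $R_p$; the only thing left to show is that among these we may restrict to the ones in which every odd variable $z_l$ occurs with degree strictly less than $p$. So the whole argument is a reduction procedure: take a $g_K$ in which some $z_l$ appears at least $p$ times, and rewrite it modulo $I_p$ as a linear combination of polynomials of the same shape but with strictly smaller total $z_l$-degree (or fewer variables), then induct.

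First I would separate, for a fixed index $l$, the places where $z_l$ can occur inside $g_K = h_y\, h_{y,z}\, h_z\, z_{k_1}\cdots z_{k_q}$: it may appear in some of the mixed commutators $[y_{i_t},z_l]$ of $h_{y,z}$, in some of the Jordan products $(z_a\circ z_l)$ of $h_z$, in the trailing monomial $z_{k_1}\cdots z_{k_q}$ (at most once there, since those indices are strictly increasing), and — after using that $z_a\circ z_a = 2z_a^2$ — possibly as a square $z_l^2$ coming from a block $(z_l\circ z_l)$. Because (\ref{1})--(\ref{4}) and Lemma \ref{lemma8} make $[y_a,y_b]$, $z_a\circ z_b$ central and make $[y_a,z_b]$ commute/anticommute past everything, I can collect all the $z_l$'s together and write the $z_l$-dependent part of $g_K$, up to sign and the central factors not involving $z_l$, as a product of the form $[y_{t_1},z_l]\cdots[y_{t_s},z_l]\,(z_l\circ z_{a_1})\cdots(z_l\circ z_{a_r})\,z_l^{c}$ where $c\in\{0,1\}$ accounts for a possible occurrence in the tail and where any extra $(z_l\circ z_l)$ blocks have been turned into even powers $z_l^{2e}$. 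The goal is then to show this factor vanishes, or reduces, once its $z_l$-degree reaches $p$.

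The heart of the matter is to see that (\ref{zp}) and (\ref{p}) are exactly the relations needed for this. Here I would use the identity (\ref{7}), $[y_1,z_2](z_3\circ z_4)-[y_1,z_3](z_2\circ z_4)$, to trade a mixed commutator $[y,z_l]$ against a Jordan factor, so that a product $[y_{t_1},z_l]\cdots[y_{t_s},z_l](z_l\circ z_{a})\cdots$ can be rewritten with the $z_l$'s repackaged; iterating, any high power of $z_l$ distributed among commutators, Jordan products and the tail can be brought into one of the two canonical shapes $[y_1,z_l]\cdots[y_{2k-1},z_l]z_l^{2n}$ or $[y_1,z_l]\cdots[y_{2k-2},z_l](z_{a}\circ z_l)z_l^{2n}$ with $2n+2k-1\ge p$. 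Peeling off an honest $z_l^2$ factor whenever the exponent exceeds what (\ref{p})/(\ref{zp}) cover reduces to the exact equality $2n+2k-1=p$, and at that point (\ref{p}) resp. (\ref{zp}) kills the polynomial in $R_p$. (Strictly, to push the exponent down one also needs $z_l^{p+1}=0$ in $R_p$, which is part of what Lemma \ref{lemma13} records, so it may be cleanest to add it to $I_p$ or cite it explicitly.) Running this for every index $l$ and inducting on $\sum_l \deg_{z_l} g_K$ finishes the proof.

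The step I expect to be the main obstacle is the bookkeeping in the previous paragraph: showing rigorously that \emph{every} way in which a single odd variable $z_l$ of degree $\ge p$ can be spread across the commutator block, the Jordan block, the central $(z_l\circ z_l)$ blocks, and the trailing monomial can be normalized — using only (\ref{5}), (\ref{7}), the centrality facts, and $z_l^2$ being a genuine square — into a scalar multiple of one of the two generators of the new relations (times central factors not involving $z_l$). The parity constraint ($2k-1$ commutators with $z_l^{2n}$, versus $2k-2$ commutators with one $z_l\circ z_{a}$ and $z_l^{2n}$) matches the two families (\ref{p}) and (\ref{zp}) precisely because $h_z$ contributes Jordan products in pairs and $h_{y,z}$ contributes as many commutators as there are $z$'s in it; making that matching airtight is the delicate part, but it is entirely combinatorial once the centrality lemmas are in hand.
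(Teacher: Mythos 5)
Your proposal is correct and follows the same route as the paper: reduce via Lemma \ref{R} to the spanning set $g_K$ and show that any $g_K$ with $\deg_{z_l}h\geq p$ is a consequence of (\ref{zp}) or (\ref{p}), hence zero in $R_p$. The paper simply asserts this last step in one line, whereas you supply the reduction argument (via (\ref{5}), (\ref{7}) and the centrality facts) that justifies it; note also that $z_l^{p+1}$ need not be added separately to $I_p$, since it already follows from (\ref{zp}) with $k=1$ by substituting $z_2\mapsto z_1$.
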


\begin{proof}
For each $l$ if $deg_{z_{j_l}}h\geq p$ then $h$ is a consequence of (\ref{zp}) or (\ref{p}), hence from Lemma \ref{lemma13} it follows that $h=0$, now the result follows from Lemma \ref{R}.
\end{proof}

\begin{theorem}
If $\mathbb{K}$ is a field of characteristic zero then the $2$-graded identities of $E_{0^{*}}\otimes E_0$ follow from the identities (\ref{1}) - (\ref{7}). If $\mathbb{K}$ is an infinite field of characteristic $p>2$ then the $2$-graded identities of $E_{0^{*}}\otimes E_0$ follow from the identities (\ref{1}) - (\ref{7}) together with the identities (\ref{zp}) and (\ref{p}).
\end{theorem}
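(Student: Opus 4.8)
The plan is to prove that the $T_2$-ideal $I$ (resp. $I_p$) coincides with $T_2(E_{0^*}\otimes E_0)$ by combining the ``upper bound'' for the relatively free algebra obtained in Lemma~\ref{R} (resp. Corollary~\ref{corolp}) with a ``lower bound'' coming from the linear independence of the images of the proposed spanning set under a suitably chosen graded homomorphism. Since $I\subset T_2(E_{0^*}\otimes E_0)$ by Lemma~\ref{lemma6} (and $I_p\subset T_2(E_{0^*}\otimes E_0)$ by Lemma~\ref{lemma13}), and since over an infinite field every $2$-graded identity follows from the $Y$-proper multihomogeneous ones, it suffices to show: if a $Y$-proper multihomogeneous polynomial $f$ lies in $T_2(E_{0^*}\otimes E_0)$, then $f\in I$ (resp. $f\in I_p$). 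Write $f$, modulo $I$ (resp. modulo $I_p$), as a linear combination of the polynomials $g_K$ of Lemma~\ref{R} (resp. Corollary~\ref{corolp}). The goal is then to show that if this linear combination is a graded identity for $A=E_{0^*}\otimes E_0$, all its coefficients vanish.

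First I would reduce to a fixed multidegree: since $f$ is multihomogeneous, all $g_K$ occurring in it have the same set of $y$-variables (each appearing with degree $1$, by the structure of $h$) and the same multidegree in the $z$-variables; the $z$-variables split according to whether they occur inside a Jordan product $z\circ z$, inside a commutator $[y,z]$, or as a ``bare'' tail factor $z_{k_i}$, but the total degree in each $z_j$ is fixed. Next, for each such $g_K$ I would evaluate it on carefully chosen basis elements of $A$ via a graded homomorphism $\varphi$: using Lemma~\ref{lemma11}, choose $\varphi(y_{i_k})_1$ and $\varphi(z_{j_l})_1$ to be sums of disjoint-support basis elements of $\mathbf{\textsl{B}}$ with the right $\mathbb{Z}_2$-degree, with the number of summands $n_k$ for each $z$-variable exactly matching its degree in $h$ (so the ``tail'' $z_{k_1}\cdots z_{k_q}$ and the $h$-part interact in a controlled way), and with all supports across different variables disjoint. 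By Lemma~\ref{lemma11} the image of the $h$-part is (up to sign and a nonzero scalar, the latter nonzero precisely because in characteristic $p$ we have restricted to $\deg_{z_l}h<p$) a single basis element times the product of the $\varphi(y_{i_k})_1$; multiplying by the images of the tail $z$-variables, each $g_K$ maps to $\pm(\text{nonzero scalar})\,b_K$ for some $b_K\in\mathbf{\textsl{B}}$.

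The crucial step is then to choose the evaluation so that \emph{distinct} $g_K$ (with the same multidegree) produce \emph{distinct} basis elements $b_K$, so that no cancellation is possible and every coefficient must vanish. This is where the combinatorics of the support sets enters: the ordered index data $i_1<\dots<i_{2m+n}$ (with the split $2m$ versus $n$ recorded by which $y$'s are paired with a $z$), $j_1\le\dots\le j_{n+2p}$ (split into the $n$ inside commutators and the $2p$ inside Jordan products), and $k_1<\dots<k_q$ together determine, via Remark~\ref{r1}, the sign and the support of $\varphi(g_K)$; by allocating disjoint blocks of basis vectors $e_i\otimes 1$ and $1\otimes e_j$ to the different roles (I would use the odd components in $A_{(1,0)}$ for $\varphi(y)_1$ and in $A_{(0,1)}$ or $A_{(1,1)}$ for the various $\varphi(z)_1$, exploiting that $E_{0^*}$ has only finitely many odd basis generators while $E_0$ has none—so the $z$'s must land appropriately), one reads off $K$ from $b_K$. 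I expect this injectivity of $g_K\mapsto b_K$ to be the main obstacle: one must check that the three different ``types'' of $z$-occurrences leave distinguishable footprints in the support and that the parameters $m,n,p,q$ and the index sequences are all recoverable. Once injectivity is established, $\varphi(f)=\sum (\pm c_K)\,b_K=0$ with the $b_K$ distinct forces all $c_K=0$, hence $f\in I$ (resp.\ $f\in I_p$), completing the proof. Finally, in the characteristic-$p$ case I would note that Corollary~\ref{corolp} already discards exactly the monomials $h$ with some $\deg_{z_l}h\ge p$ (those being consequences of (\ref{zp}) and (\ref{p})), so the scalars $\prod n_k!$ appearing in Lemma~\ref{lemma11} are units in $\mathbb{K}$ and the argument goes through verbatim.
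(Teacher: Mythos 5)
Your overall skeleton matches the paper's: reduce to $Y$-proper multihomogeneous polynomials, use Lemma~\ref{R} (resp.\ Corollary~\ref{corolp}) to write $f=\sum_K\alpha_K g_K$ modulo the candidate ideal, and then evaluate via a graded homomorphism built as in Lemma~\ref{lemma11}. But the step you yourself flag as ``the main obstacle'' --- choosing one evaluation $\varphi$ under which distinct $g_K$ go to distinct basis elements $b_K$ so that no cancellation can occur --- is a genuine gap, and as stated it cannot be repaired in that form. For a fixed $\varphi$ with $\varphi(z_j)_1$ a sum of exactly $n_j=\deg_{z_j}h_{K_0}$ basis elements, a competing term $g_{K_1}$ with some $j\in K_1\setminus K_0$ has $\deg_{z_j}h_{K_1}=n_j-1$, so $\varphi(h_{K_1})$ involves $\varphi(z_j)_1^{\,n_j-1}$, which is a \emph{sum} of $n_j$ distinct basis elements rather than a single one; after multiplying by the tail, $\varphi(g_{K_1})$ is a linear combination of several basis elements whose supports can overlap with $\varphi(g_{K_0})$ and with each other. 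So the premise ``each $g_K\mapsto\pm(\text{scalar})\,b_K$ with the $b_K$ distinct'' fails, and the injectivity you hoped to read off from the supports is not available from a single evaluation.

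The paper closes this gap by an annihilation argument rather than a separation argument. One picks a subset $K_0$ \emph{minimal} (under inclusion) among those with $\alpha_{K}\neq 0$ and tailors $\varphi_{K_0}$ to $K_0$, taking $\varphi_{K_0}(z_j)_1$ to have exactly $\deg_{z_j}h_{K_0}$ summands. For any other $K_1$ with $\alpha_{K_1}\neq 0$, minimality gives some $k_a\in K_0\setminus K_1$; since $g_{K_0}$ and $g_{K_1}$ share the same multidegree, $\deg_{z_{k_a}}h_{K_1}>\deg_{z_{k_a}}h_{K_0}=n_{k_a}$, and the first assertion of Lemma~\ref{lemma11} kills $\varphi_{K_0}(g_{K_1})$ outright. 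Thus $0=\varphi_{K_0}(f)=\alpha_{K_0}\varphi_{K_0}(g_{K_0})$ with $\varphi_{K_0}(g_{K_0})\neq 0$ (here the restriction $\deg_{z_l}h<p$ from Corollary~\ref{corolp} guarantees the factorials are units, exactly as you observed), forcing $\alpha_{K_0}=0$, a contradiction. If you replace your injectivity step by this minimality-plus-annihilation device, the rest of your argument goes through.
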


\begin{proof}
Let $\mathbb{K}$ be an infinite field of characteristic $p>2$. Let $f(y_1,\dots,y_m, z_1\dots,z_m)$ be a multihomogeneous $Y$-proper $2$-graded identity for $E_{0^{*}}\otimes E_0$, then it follows from Corollary \ref{corolp} that in $R_p$ we have $f=\sum \alpha_K g_K$, where \[g_K=h_K(y_{1},\dots,y_{m},z_{1},\dots, z_{n})z_{k_1}\dots z_{k_q},\] and 
$K=\{k_1,\dots, k_q\}$. If $\alpha_K=0$ for every subset $K$ of $\{1,\dots, m\}$ we are done. If $\alpha_K\neq 0$ for some $K \subset\{1,\dots, m\}$, we denote by $K_0$ a minimal subset with $\alpha_{K_0}\neq 0$. Given $K \subset\{1,\dots, m\}$ let $\varphi_{K}:\mathbb{K}\langle Y\cup Z \rangle \rightarrow A$ be a $2$-graded homomorphism such that $\varphi(z_{j})_1=\sum_{l=1}^{n_j} \alpha_j b_l^j$, where $n_j$ is the degree of $z_j$ in the polynomial $h_K$,  $\varphi(z_j)_0=1\otimes e_{a_j}$ and the elements $\varphi(y_i)$, $\varphi(z_j)_1$ and 
$\varphi(z_j)_0$ have disjoint supports.

Let $K_1$ be a subset of $\{1,2,\dots,n\}$ such that $\alpha_{K_1}\neq 0$, since $K_0$ is minimal we have $K_0\not \subset K_1$. Then there exists $k_a \in K_0$ such that $k_a \notin K_1$. Since $g_{K_0}$ and $g_{K_1}$ have the same multidegree we conclude that $deg z_{k_a}(h_{K_1})>deg z_{k_a}(h_{K_0})$. Therefore $\varphi_{K_0}(z_{k_a})_1=\sum_{l=1}^{n_{k_a}} \alpha_{k_a} b_l^{k_a}$ and 
$deg z_{k_a}(h_{K_1})>n_{k_a}=deg z_{k_a}(h_{K_0})$, hence it follows from Lemma \ref{lemma11} that $\varphi_{K_0}(g_{K_1})=0$. Moreover $deg z_j(h_{K_0})<p$ therefore it follows from Lemma \ref{lemma11} that $\varphi_{K_0}(g_{K_0})\neq 0$. Hence we conclude that \[0=\varphi_{K_0}(f)=\alpha_{K_0}\varphi_{K_0}(g_{K_0}),\] which is a contradiction. Hence $\alpha_K=0$ for all $K \subset\{1,\dots, m\}$ and $f=0$ in $R_p$, therefore $f \in I_p$.

Since $f$ is an arbitrary $Y$-proper multihomogeneous polynomial we conclude that $T_2(E_{0^{*}}\otimes E_0) \subset I_p$ and the reverse inclusion follows from Lemma \ref{lemma13}. The case where $\mathbb{K}$ is a field of characteristic $0$ is analogous.
\end{proof}

\section{The Graded Identities with an $E_{\infty}$ Component}

In this section consider the algebra $A$ with one of the gradings $E_{\infty}\otimes E_{k}$, $E_{\infty}\otimes E_{j^{*}}$ or $E_{\infty}\otimes E_{\infty}$. Clearly the results in this section also hold for $E_{k}\otimes E_{\infty}$ and $E_{j^{*}}\otimes E_{\infty}$.

We denote by $\alpha(a) \in \mathbb{Z}_2$ the degree of a homogeneous element $a\in A$ and by $\alpha(f)\in \mathbb{Z}_2$ the degree of a homogeneous polynomial in 
$\mathbb{K}\langle X \rangle$. We recall that $g(a)$ denotes the degree of a homogeneous element in the $\mathbb{Z}_2\times \mathbb{Z}_2$-grading of $A$. In this section prove that the graded identities of $A$ are essentially the ordinary identities of $E\otimes E$.

\begin{lemma}\label{l3}
Given $g_1,\dots, g_n \in \mathbb{Z}_2\times \mathbb{Z}_2$ and $h_1,\dots, h_n \in \mathbb{Z}_2$ there exists $b_1,\dots, b_n \in \textbf{\textsl{B}}$ with pairwise disjoint supports such that $g(b_i)=g_i$ and $\alpha(b_i)=h_i$.
\end{lemma}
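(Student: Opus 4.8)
The plan is to build the $b_i$ one index at a time, using for each $i$ a block of basis vectors $e_\ast$ disjoint from the blocks used for the other indices. Since each $b_i$ involves only finitely many elements of $C$ while $C$ is infinite, the $b_1,\dots,b_n$ produced this way automatically have pairwise disjoint supports, so it suffices to realize, from a reservoir of ``fresh'' indices, a single prescribed pair $(g_i,h_i)$. To keep the bookkeeping transparent I would first record that if $b=u\otimes v\in\textbf{\textsl{B}}$ with $u$ a product of $s$ distinct generators of the first copy of $E$ and $v$ a product of $t$ distinct generators of the second copy, then $g(b)=(\bar s,\bar t)$ (bars denoting reduction mod $2$), while $\alpha(b)=\|u\|_\infty+\|v\|'$, where $\|\cdot\|'$ denotes the defining map of the grading on the second factor (one of $\|\cdot\|_k$, $\|\cdot\|_{j^\ast}$, $\|\cdot\|_\infty$). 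Write $g_i=(g_i',g_i'')\in\mathbb{Z}_2\times\mathbb{Z}_2$.

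For a fixed $i$ I would first handle the second factor: if $g_i''=0$ take $v_i=1$, and if $g_i''=1$ take $v_i=e_c$ for an unused index $c$ in the second copy of $E$; such a $c$ exists because each of $E_k$, $E_{j^\ast}$, $E_\infty$ has infinitely many basis vectors $e_\ast$. This fixes the second coordinate of $g(b_i)$ to be $g_i''$ and produces a value $\delta_i:=\|v_i\|'\in\mathbb{Z}_2$ which I cannot prescribe but can read off. It then remains to choose $u_i$, a product of distinct unused generators of the $E_\infty$ factor, whose number of generators is $\equiv g_i'\pmod{2}$ and with $\|u_i\|_\infty=h_i+\delta_i$; then $b_i:=u_i\otimes v_i$ satisfies $g(b_i)=(g_i',g_i'')=g_i$ and $\alpha(b_i)=\|u_i\|_\infty+\delta_i=h_i$ (and $b_i=1$ in the degenerate case, which has empty support).

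The only step with any real content is that this last choice of $u_i$ is always available, and it is here that the hypothesis ``the first tensor factor is $E_\infty$'' enters: the map $\|\cdot\|_\infty$ assigns degree $1$ to the (infinitely many) $e_\ast$ with $\ast$ even and degree $0$ to the (infinitely many) $e_\ast$ with $\ast$ odd. Hence, fixing an unused odd index $o$ and an unused even index $p$, each of the four possible pairs (parity of the number of generators, value of $\|\cdot\|_\infty$) is realized --- by $u_i=1$, $u_i=e_o$, $u_i=e_p$, $u_i=e_pe_o$ respectively --- and I would apply this with the pair $(g_i',\,h_i+\delta_i)$. Running the construction for $i=1,\dots,n$ over pairwise disjoint blocks of fresh indices then yields the required $b_1,\dots,b_n$. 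I do not anticipate a genuine obstacle; the one thing to keep straight is that it is the presence of infinitely many homogeneous generators of \emph{both} degrees in the $E_\infty$ component --- combined with the mere existence of infinitely many generators in the other component --- that makes all target pairs $(g_i,h_i)$ simultaneously attainable, which is precisely what distinguishes these gradings from those studied in Sections 3 and 5.
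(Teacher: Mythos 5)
Your proof is correct and rests on the same key observation as the paper's: because $E_\infty$ has infinitely many generators of each homogeneous degree, the $\alpha$-degree of a factor in the $E_\infty$ component can be prescribed independently of its contribution to the $\mathbb{Z}_2\times\mathbb{Z}_2$-degree. The paper packages this as a correction step (multiplying arbitrary $c_l$ realizing the $g_l$ by suitable length-two elements $e_{a_{2l-1}}e_{a_{2l}}\otimes 1$ of degree $(0,0)$), whereas you build the $b_i$ directly, but the argument is essentially the same.
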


\begin{proof}
Clearly there exists $c_1,\dots, c_n \in \textbf{\textsl{B}}$ with pairwise disjoint supports such that $g(c_i)=g_i$. Let $d_l=e_{a_{2l-1}}e_{a_{2l}}\otimes 1$, where $a_1,\dots,a_{2n}$ is a sequence of pairwise different natural numbers such that the elements $a_{2l}$, $l=1,2,\dots,n$ are even and $a_{2l-1}$ is even if and only if $h_l=\alpha(c_l)$. Moreover we may choose the elements $a_i$ sufficiently large so that the elements $b_l=c_ld_l$ are different from $0$ and the elements $\pm b_l \in \textbf{\textsl{B}}$ have pairwise disjoint supports. In this case $g(b_l)=g(c_l)=g_l$ and $\alpha(b_l)=h_l$ and the lemma is proved.
\end{proof}

\begin{theorem}
If $\mathbb{K}$ is an infinite field then $f(x_1,\dots, x_n)$ is a $2$-graded identity for $A$ if and only if $f(x_1,\dots, x_n)$ is an ordinary identity for $A$.
\end{theorem}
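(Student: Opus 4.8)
The plan is to prove the non-trivial direction: every $2$-graded identity of $A$ is already an ordinary identity. Since the field is infinite, it suffices to show that every multihomogeneous $2$-graded identity $f(x_1,\dots,x_n)$ of $A$ is an ordinary identity, and for this it is enough to verify $f(a_1,\dots,a_n)=0$ for arbitrary elements $a_i$ expressed in the canonical basis $\textbf{\textsl{B}}$. The key reduction is Proposition \ref{prop1}: if we can reduce an arbitrary evaluation to one in which the basis elements appearing have pairwise disjoint supports, then vanishing on disjoint-support evaluations propagates back. So the first step is to write each $a_l=\sum_k \alpha_k^l b_k^l$ with $b_k^l\in\textbf{\textsl{B}}$, and observe that by Remark \ref{r1} we may assume (after the reduction of Proposition \ref{prop1}, replacing each $b_k^l$ by a homogeneous element of the same $\mathbb{Z}_2$-degree $g(b_k^l)$) that all the $b_k^l$ occurring across all the $a_l$ have pairwise disjoint supports.

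Next I would use the fact that $f$ is multihomogeneous and hence, up to the reduction already performed, it suffices to prove $f(b_1,\dots,b_n)=0$ for $b_1,\dots,b_n\in\textbf{\textsl{B}}$ with pairwise disjoint supports. Here is where Lemma \ref{l3} enters: since $f$ is a $2$-graded identity, $f$ vanishes whenever we substitute for each $x_i$ a homogeneous element of the appropriate $\mathbb{Z}_2$-degree; but we want to deduce vanishing on arbitrary $b_i$ of arbitrary $\mathbb{Z}_2\times\mathbb{Z}_2$-degrees. Given $b_1,\dots,b_n\in\textbf{\textsl{B}}$ with disjoint supports, set $g_i=g(b_i)$ and let $h_i$ be whatever $\mathbb{Z}_2$-degree we please; Lemma \ref{l3} produces $b_1',\dots,b_n'\in\textbf{\textsl{B}}$ with disjoint supports, $g(b_i')=g_i$ and $\alpha(b_i')=h_i$. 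The construction in Lemma \ref{l3} multiplies each $b_i$ by an extra factor $d_i=e_{a_{2i-1}}e_{a_{2i}}\otimes 1$ lying in $A_{(0,0)}$ — hence central and of trivial $\mathbb{Z}_2$-degree — so that $b_i'=b_id_i$ differs from $b_i$ only by a central factor of even degree. The point of the $E_\infty$ component is precisely that $A_{(0,0)}$ contains elements $e_{a}e_{a'}\otimes 1$ of both $\mathbb{Z}_2$-degrees (choosing the parity of $a$), which is what makes Lemma \ref{l3} work; for the canonical grading this fails.

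Now assign to each index $i$ a $\mathbb{Z}_2$-degree $h_i$ consistently with a partition of $\{1,\dots,n\}$ into a "$Y$-part" and a "$Z$-part", apply Lemma \ref{l3} to get $b_i'=b_id_i$ homogeneous of degree $h_i$ in the $\mathbb{Z}_2$-grading, and use that $f$, being a $2$-graded identity, vanishes on the $b_i'$: $f(b_1',\dots,b_n')=0$. Since each $d_i$ is central and, crucially, since $f$ is $Y$-proper multihomogeneous so that each variable's total degree is fixed, expanding $f(b_1d_1,\dots,b_nd_n)$ collects the central factors out and yields $f(b_1',\dots,b_n') = \pm (\prod_i d_i)\, f(b_1,\dots,b_n)$; as $\prod_i d_i\neq 0$ in $A$ (the $a_j$ were chosen with disjoint supports and large) and $A$ has no zero divisors among nonzero products of disjoint-support basis monomials, we conclude $f(b_1,\dots,b_n)=0$. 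Running this back through Proposition \ref{prop1} gives $f(a_1,\dots,a_n)=0$ for arbitrary $a_i$, so $f$ is an ordinary identity. The converse is immediate since an ordinary identity holds under all substitutions, in particular graded ones.

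The main obstacle I expect is the bookkeeping in the second-to-last step: one must be careful that multiplying $b_i$ by the central even-degree factor $d_i$ really does factor out of $f$ cleanly. This needs $f$ to be $Y$-proper and multihomogeneous (so that replacing $b_i$ by $b_id_i$ multiplies the value of each monomial of $f$ by the same product $\prod_i d_i^{\deg_{x_i}f}$, up to sign governed by Remark \ref{r1}), and one should double-check that the sign is the same for every monomial of $f$ — which holds because all monomials of a multihomogeneous $f$ have the same multidegree, so Remark \ref{r1} assigns them a common sign relative to the chosen supports. Once that is pinned down, everything else is the routine disjoint-support reduction already packaged in Proposition \ref{prop1} and Lemma \ref{l3}.
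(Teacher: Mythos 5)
Your strategy diverges from the paper's at a crucial point, and the divergence introduces a genuine gap. The paper never ``divides out'' the auxiliary central factors: given arbitrary $a_l'=\sum_k\alpha_k^l c_k^l$, it applies Lemma \ref{l3} once to the whole family $\{c_k^l\}$ to produce disjoint-support elements $b_k^l$ with $g(b_k^l)=g(c_k^l)$ and with $\alpha(b_k^l)=\alpha(x_l)$ for every $k$, so that the \emph{sums} $a_l=\sum_k\alpha_k^l b_k^l$ are themselves homogeneous; then $f(a_1,\dots,a_n)=0$ because $f\in T_2(A)$, and Proposition \ref{prop1} transfers this vanishing to $f(a_1',\dots,a_n')=0$. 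You instead reduce to evaluations at single basis elements $b_1,\dots,b_n\in\textbf{\textsl{B}}$ and then try to recover $f(b_1,\dots,b_n)$ from $f(b_1d_1,\dots,b_nd_n)$.

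Two steps break. First, the reduction ``it suffices to prove $f(b_1,\dots,b_n)=0$ for single $b_i\in\textbf{\textsl{B}}$ with disjoint supports'' is not available for a multihomogeneous (as opposed to multilinear) $f$: vanishing on individual basis elements says nothing about vanishing on their linear combinations, and over an infinite field of characteristic $p>2$ one cannot reduce graded identities to multilinear ones. Indeed, every basis element with nonempty support squares to zero, so whenever $\deg_{x_i}f\ge 2$ the evaluation of $f$ at single basis elements is trivially zero and carries no information about $f$. Second, even granting that reduction, the extraction of the central factors fails for the same degree reason: $d_i=e_{a_{2i-1}}e_{a_{2i}}\otimes 1$ satisfies $d_i^2=0$, so $\prod_i d_i^{\deg_{x_i}f}=0$ as soon as some $\deg_{x_i}f\ge 2$, and your identity $f(b_1d_1,\dots,b_nd_n)=\pm\bigl(\prod_i d_i^{\deg_{x_i}f}\bigr)f(b_1,\dots,b_n)$ reads $0=0$. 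Your argument therefore only covers multilinear $f$, which would suffice in characteristic zero but not over a general infinite field, which is the setting of the theorem. The repair is exactly the paper's route: attach a factor $d_k^l$ to each basis summand $c_k^l$ (this is what the construction inside Lemma \ref{l3} does), keep the sums intact, evaluate the graded identity on the resulting homogeneous sums, and let Proposition \ref{prop1} --- whose proof compares coefficients of distinct basis elements rather than cancelling a common factor --- carry the conclusion back to the arbitrary substitution.
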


\begin{proof}
Let $f \in T_2(A)$ be a multihomogeneous polynomial and $a_1^{\prime},\dots, a_n^{\prime}$ be arbitrary elements in $E\otimes E$. Let $a_l^{\prime}=\sum_{k=1}^{n_l} \alpha_k^l c_k^l$, where $\alpha_k^l \in K$, 
$c_k^l \in \textbf{\textsl{B}}$. It follows from Lemma \ref{l3} that there exists $b_k^l \in \textbf{\textsl{B}}$ with pairwise disjoint supports such that $g(b_k^l)=g(c_k^l)$ and the elements $a_l=\sum_{k=1}^{n_l}\alpha_{k}^l b_k^l$ are homogeneous in $A$ with $\alpha(a_l)=\alpha(x_l)$. Since $f$ is a $2$-graded identity for $A$ we have $f(a_1,\dots, a_n)=0$ and it follows from Proposition \ref{prop1} that $f(a_1^{\prime},\dots, a_n^{\prime})=0$. 
\end{proof}

\section{The Graded Identities of $E_{k^*}\otimes E_{j^*}$}

In this section we describe the graded identites of $E_{k^*}\otimes E_{j^*}$.

\begin{lemma}
The polynomial $z_1z_2\dots z_{j+k+1}$ is a graded identity for $E_{k^*}\otimes E_{j^*}$ .
\end{lemma}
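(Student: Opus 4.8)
The plan is to evaluate an arbitrary product $z_1 z_2 \dots z_{j+k+1}$ under a graded homomorphism $\varphi : \mathbb{K}\langle Y \cup Z\rangle \to E_{k^*}\otimes E_{j^*}$ and show the result is always $0$. Each $\varphi(z_i)$ is an element of the odd component $A_1$ of $E_{k^*}\otimes E_{j^*}$. Recall that in the $\mathbb{Z}_2\times\mathbb{Z}_2$-grading, $A_1 = A_{(1,0)}\oplus A_{(0,1)}$ for this quotient grading, so I can write $\varphi(z_i) = u_i + v_i$ with $u_i \in A_{(1,0)} = E_1^{(k^*)}\otimes E_0^{(j^*)}$ and $v_i \in A_{(0,1)} = E_0^{(k^*)}\otimes E_1^{(j^*)}$. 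Expanding the product $\prod_i \varphi(z_i) = \prod_i (u_i + v_i)$ gives a sum of $2^{j+k+1}$ monomials, each a product of some $u$'s and some $v$'s.

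**The key counting argument.** The main point is that in any such monomial, at least $k+1$ of the factors are of type $u$ or at least $j+1$ of the factors are of type $v$ (since there are $j+k+1$ factors total, and $(k)+(j) < j+k+1$). Now I use the structure of the gradings $E_{k^*}$ and $E_{j^*}$: in $E_{k^*}$, the odd part is spanned by basis monomials $e_{i_1}\cdots e_{i_r}$ containing an \emph{odd} number of the first $k$ generators $e_1,\dots,e_k$. So a product of $k+1$ elements of $E_1^{(k^*)}$ — after expanding each into basis monomials — is a sum of products of basis monomials each using at least one generator from $\{e_1,\dots,e_k\}$; by the pigeonhole principle two of these $k+1$ factors must share a generator $e_s$ with $s \le k$, and since $e_s^2 = 0$ in the Grassmann algebra, that product vanishes. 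Hence any monomial with $\ge k+1$ factors of type $u$ contributes $0$ to the first tensor slot; symmetrically any monomial with $\ge j+1$ factors of type $v$ contributes $0$ to the second tensor slot. Therefore every one of the $2^{j+k+1}$ monomials vanishes, so $\varphi(z_1\cdots z_{j+k+1}) = 0$.

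**Packaging.** I would phrase this cleanly by first reducing to evaluations on basis elements of $\textbf{\textsl{B}}$ (using Proposition \ref{prop1} or Remark \ref{r1}, or simply linearity), so that each $\varphi(z_i)$ is a \emph{single} basis element $b_i \in \textbf{\textsl{B}}$ of odd degree. Such a $b_i$ has the form $b_i = p_i \otimes q_i$ up to sign, where exactly one of $p_i, q_i$ lies in the odd part of its respective Grassmann algebra (because $g(b_i) \in \{(1,0),(0,1)\}$). Among $b_1,\dots,b_{j+k+1}$, the number that are of type $(1,0)$ plus the number of type $(0,1)$ equals $j+k+1 > j+k$, so one of the two counts exceeds $k$ or the other exceeds $j$. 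In the first case the $k+1$ (or more) first-tensor-factors are odd elements of $E^{(k^*)}$, each divisible by some $e_s$ with $s\le k$, and pigeonhole plus $e_s^2=0$ forces the product to be zero; the second case is symmetric.

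**Expected obstacle.** There is no deep obstacle here — the argument is a pigeonhole count combined with $e_s^2 = 0$. The only thing requiring mild care is the bookkeeping: making sure the reduction to single basis elements is legitimate (it is, by multilinearity of the monomial in each $z_i$, or directly by Remark \ref{r1}), and correctly identifying that an odd homogeneous basis element of $E_{k^*}$ must involve at least one of $e_1,\dots,e_k$, which is immediate from the definition of $\|\cdot\|_{k^*}$. I would also note that the bound $j+k+1$ is sharp, since a product of $k$ suitable type-$(1,0)$ elements times $j$ suitable type-$(0,1)$ elements need not vanish, but that observation is not needed for the lemma itself.
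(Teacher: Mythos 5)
Your proof is correct and follows essentially the same route as the paper: reduce by multilinearity to odd basis elements, observe that each such element must be divisible by one of the $j+k$ generators $e_1\otimes 1,\dots,e_k\otimes 1,1\otimes e_1,\dots,1\otimes e_j$, and conclude by the pigeonhole principle and $e_s^2=0$ that the product of $j+k+1$ of them vanishes. The paper states the pigeonhole step in one shot over all $j+k$ special generators rather than splitting into the two tensor slots, but the argument is the same.
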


\begin{proof}
Since the polynomial is multilinear we need only to consider substitutions of the indeterminates $z_i$ by elements $b_i$ of the basis $\textbf{\textsl{B}}$ with degree $1$ in the 
$\mathbb{Z}_2$-grading of $E_{k^*}\otimes E_{j^*}$. In this case $b_i$ must be divisible by an element of the set $\{e_1\otimes 1,\dots, e_k\otimes 1, 1\otimes e_1, \dots, 1 \otimes e_j\}$, hence it follows from the pigeonhole principle that the elements $b_1,\dots, b_{j+k+1}$ cannot have disjoint supports, therefore $b_1b_2\dots b_{j+k+1}=0$.
\end{proof}

\begin{lemma}\label{l4}
Let $g_1,\dots, g_n \in \mathbb{Z}_2\times \mathbb{Z}_2$ and $h_1,\dots, h_n \in \mathbb{Z}_2$. If the number of indexes $i$ in $\{1,2,\dots,n\}$ with $h_i=1$ is $\leq k+j$ then there exists $b_1,\dots, b_n \in \textbf{\textsl{B}}$ with pairwise disjoint supports such that $g(b_i)=g_i$ and $\alpha(b_i)=h_i$.
\end{lemma}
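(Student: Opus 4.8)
The plan is to adapt the proof of Lemma~\ref{l3}. In that lemma one first produces basis elements $c_i$ realizing the prescribed $\mathbb{Z}_2\times\mathbb{Z}_2$-degrees $g(c_i)=g_i$ and then corrects their $\mathbb{Z}_2$-degrees by multiplying by ``large'' generators, which in the $E_\infty$ situation are available in both degrees. Here the difference is that the only elements of $C$ of $\alpha$-degree $1$ are $e_1\otimes1,\dots,e_k\otimes1$ and $1\otimes e_1,\dots,1\otimes e_j$, so there are exactly $k+j$ of them, and since the required elements must have pairwise disjoint supports no such generator may be used twice. Thus raising the $\mathbb{Z}_2$-degree of an element is a resource that can be spent at most $k+j$ times, and the hypothesis that the number of indices $i$ with $h_i=1$ is at most $k+j$ is precisely what is needed.

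First I would choose $c_1,\dots,c_n\in\textbf{\textsl{B}}$ with pairwise disjoint supports, with $g(c_i)=g_i$, and with every element of $supp(c_i)$ of the form $e_a\otimes1$ or $1\otimes e_a$ with $a>\max\{k,j\}$: for instance take $c_i$ to be $e_{p_i}\otimes1$, $1\otimes e_{q_i}$, $e_{p_i}\otimes e_{q_i}$, or $e_{p_i}e_{p_i'}\otimes1$ according as $g_i$ is $(1,0)$, $(0,1)$, $(1,1)$, or $(0,0)$, with all the indices $p_i,p_i',q_i$ distinct and larger than $\max\{k,j\}$. Since no element of $C$ of $\alpha$-degree $1$ occurs in any $c_i$, we have $\alpha(c_i)=0$ for all $i$. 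Put $S=\{i:h_i=1\}$; by hypothesis $|S|\le k+j$, so I may fix an injective map $i\mapsto u_i$ from $S$ into the set $\{e_1\otimes1,\dots,e_k\otimes1,1\otimes e_1,\dots,1\otimes e_j\}$ of the $k+j$ elements of $C$ of $\alpha$-degree $1$.

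Finally I would define the $b_i$. For $i\notin S$ set $b_i=c_i$, so $g(b_i)=g_i$ and $\alpha(b_i)=0=h_i$. For $i\in S$, pick a large generator $w_i$ of $C$ lying on the same side as $u_i$ (a left generator $e_a\otimes1$ if $u_i=e_s\otimes1$, a right generator $1\otimes e_a$ if $u_i=1\otimes e_t$), with all the $w_i$ distinct from one another and from every element of $\bigcup_j supp(c_j)$ and from every $u_j$; then $c_i$, $u_i$, $w_i$ have pairwise disjoint supports, so by Remark~\ref{r1} there is $b_i\in\textbf{\textsl{B}}$ with $c_iu_iw_i=\pm b_i$. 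Since $u_i$ and $w_i$ are on the same side we have $g(u_iw_i)=(0,0)$, hence $g(b_i)=g(c_i)=g_i$; and $\alpha(b_i)=\alpha(c_i)+\alpha(u_i)+\alpha(w_i)=0+1+0=1=h_i$. By construction the sets $supp(b_i)$ are pairwise disjoint: the $supp(c_i)$ are disjoint and consist only of large generators, the $u_i$ are distinct among the $k+j$ generators of $\alpha$-degree $1$, and the $w_i$ are distinct large generators chosen to avoid everything else. I do not expect a genuine obstacle here; the only point requiring attention is bookkeeping disjointness while preserving the $\mathbb{Z}_2\times\mathbb{Z}_2$-degree during the parity corrections, which is handled by always correcting with a same-side pair $u_iw_i$ of degree $(0,0)$, together with the observation that the bound $k+j$ is exactly what allows each correction to be assigned its own degree-one generator.
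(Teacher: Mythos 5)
Your proof is correct and is exactly the argument the paper intends: the paper's proof of this lemma is omitted with the remark that it is ``analogous to Lemma~\ref{l3}'', and your construction is precisely that adaptation, correcting the $\alpha$-degree of large-index elements $c_i$ realizing $g(c_i)=g_i$ by multiplying with a same-side pair of $\mathbb{Z}_2\times\mathbb{Z}_2$-degree $(0,0)$, with the hypothesis $|\{i:h_i=1\}|\le k+j$ guaranteeing enough distinct degree-one generators for disjointness.
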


\begin{proof}
The proof is analogous to that of Lemma \ref{l3} and will be omitted.
\end{proof}

Let $I_m$ denote the $T_2$-ideal generated by the identity $z_1\dots z_{m+1}$ together with the polynomials $f(x_1,\dots, x_n) \in \mathbb{K}\langle X \rangle$ that are ordinary identities for
 $E\otimes E$.

\begin{theorem}
If $\mathbb{K}$ is an infinite field then $T_2(E_{k^*}\otimes E_{j^*})=I_{j+k}$.
\end{theorem}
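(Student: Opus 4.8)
The inclusion $I_{j+k}\subseteq T_2(E_{k^*}\otimes E_{j^*})$ is immediate from the previous lemma together with the obvious fact that every ordinary identity of $E\otimes E$ is a graded identity of the graded algebra $E_{k^*}\otimes E_{j^*}$; since $T_2(E_{k^*}\otimes E_{j^*})$ is a $T_2$-ideal it contains the $T_2$-ideal $I_{j+k}$. So the content of the theorem is the reverse inclusion, and because $\mathbb{K}$ is infinite it is enough to prove that every multihomogeneous $f\in T_2(E_{k^*}\otimes E_{j^*})$ lies in $I_{j+k}$.

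First I would dispose of the polynomials of large odd degree. A monomial $m$ whose total degree in the odd letters is at least $j+k+1$ can be written, reading its odd letters $z_{a_1},\dots,z_{a_{j+k+1}},\dots$ from left to right, as $w_0\cdot z_{a_1}\cdot(w_1z_{a_2})\cdots(w_{j+k}z_{a_{j+k+1}})\cdot w$, where the $w_i$ are (possibly empty) products of even letters and $w$ is the remaining suffix; setting $z_1\mapsto z_{a_1}$ and $z_i\mapsto w_{i-1}z_{a_i}$ — each an odd monomial — exhibits $m$ as an element of the $T_2$-ideal generated by $z_1\cdots z_{j+k+1}$. Since $f$ is multihomogeneous in the odd variables, either every monomial of $f$ has at least $j+k+1$ odd letters, in which case $f\in I_{j+k}$ and we are done, or else every monomial of $f$ has at most $j+k$ odd letters, in which case $f$ involves at most $j+k$ odd variables. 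From now on assume we are in the latter case.

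Now I claim such an $f$ is an ordinary identity of $E\otimes E$, which gives $f\in I_{j+k}$ by definition of $I_{j+k}$. Since $\mathbb{K}$ is infinite and $f$ is multihomogeneous it suffices to check that $f$ vanishes on all substitutions $x_i\mapsto b_i'$ with $b_i'\in\textbf{\textsl{B}}$. Fix such $b_1',\dots,b_n'$, put $g_i=g(b_i')\in\mathbb{Z}_2\times\mathbb{Z}_2$ and $h_i=\alpha(x_i)\in\mathbb{Z}_2$; then $h_i=1$ only for the (at most $j+k$) odd variables, so Lemma \ref{l4} applies and produces $b_1,\dots,b_n\in\textbf{\textsl{B}}$ with pairwise disjoint supports, $g(b_i)=g_i$ and $\alpha(b_i)=h_i=\alpha(x_i)$. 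Each $b_i$ is homogeneous of the degree matching $x_i$, so $f(b_1,\dots,b_n)=0$ because $f\in T_2(E_{k^*}\otimes E_{j^*})$; and since the $b_i$ have pairwise disjoint supports and $g(b_i)=g(b_i')$, Proposition \ref{prop1} (applied with each $a_i=b_i$ and $a_i'=b_i'$) gives $f(b_1',\dots,b_n')=0$. Hence $f$ is an ordinary identity of $E\otimes E$, so $f\in I_{j+k}$, and therefore $T_2(E_{k^*}\otimes E_{j^*})\subseteq I_{j+k}$.

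The argument is essentially the Section~4 argument in disguise; the only genuinely new point is that the reduction in the second paragraph forces the number of odd variables down to $j+k$, which is precisely the hypothesis of Lemma \ref{l4}. I expect the only place needing a little care to be that second-paragraph rewriting: one must check the pieces $w_{i-1}z_{a_i}$ are homogeneous of odd degree so that the substitution is a legitimate graded substitution, which is clear since each $w_{i-1}$ is a product of even letters.
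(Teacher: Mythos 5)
Your first two paragraphs are sound and follow the paper's route: the containment $I_{j+k}\subseteq T_2(E_{k^*}\otimes E_{j^*})$ is as you say, and your explicit check that a monomial of total odd degree $\geq j+k+1$ is a consequence of $z_1\cdots z_{j+k+1}$ is a detail the paper passes over with ``modulo the identity $z_1\cdots z_{j+k+1}$''. The gap is in the third paragraph, in the sentence ``Since $\mathbb{K}$ is infinite and $f$ is multihomogeneous it suffices to check that $f$ vanishes on all substitutions $x_i\mapsto b_i'$ with $b_i'\in\textbf{\textsl{B}}$.'' That principle is false once some $\deg_{x_i}f\geq 2$: infiniteness of $\mathbb{K}$ reduces an identity to its multihomogeneous components, but it does not reduce a multihomogeneous component to substitutions of single basis elements. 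For instance $z\circ z$ vanishes on every basis element of $E$ of odd length but not on $e_1+e_2e_3e_4$, so vanishing on $\textbf{\textsl{B}}$-tuples says nothing about vanishing on linear combinations. Your application of Proposition \ref{prop1} with $v_l=1$ does correctly yield $f(b_1',\dots,b_n')=0$ for all basis tuples, but that is all it yields; the conclusion ``hence $f$ is an ordinary identity'' does not follow.

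The paper closes exactly this gap by substituting genuine linear combinations: write $a_l'=\sum_{k=1}^{n_l}\alpha_k^l c_k^l$ with $n_l=\deg_{x_l}(f)$ (sums of $n_l$ basis elements suffice to test a polynomial of degree $n_l$ in $x_l$), apply Lemma \ref{l4} to the whole family $\{c_k^l\}$ simultaneously to obtain $b_k^l\in\textbf{\textsl{B}}$ with pairwise disjoint supports, $g(b_k^l)=g(c_k^l)$ and $\alpha(b_k^l)=\alpha(x_l)$, so that each $a_l=\sum_k\alpha_k^l b_k^l$ is homogeneous of the parity of $x_l$; then $f(a_1,\dots,a_n)=0$ and Proposition \ref{prop1} in its full strength gives $f(a_1',\dots,a_n')=0$. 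This is also where your degree reduction really earns its keep: the number of indices with $h=1$ required of Lemma \ref{l4} is $\sum_{l\,:\,x_l\ \mathrm{odd}} n_l$, i.e.\ the \emph{total degree} of $f$ in the odd variables, and it is this quantity --- not merely the number of odd variables, which is what your version controls --- that must be $\leq j+k$.
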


\begin{proof}
It follows from Lemma \ref{l4} that $I_{k+l}\subset T_2(E_{k^*}\otimes E_{j^*})$. Let $f(x_1,\dots, x_n)$ be a multihomogeneous $2$-graded identity for $E_{k^*}\otimes E_{j^*}$, modulo the identity $z_1\dots z_{j+k+1}$ we may assume that the total degree of $f$ in the odd indeterminates is $\leq k+l$. Hence if $a_l^{\prime}=\sum_{k=1}^{n_l} \alpha_k^l c_k^l$, where $\alpha_k^l \in K$, $c_k^l \in \textbf{\textsl{B}}$ and $n_l=deg_{x_l}(f)$, it follows from Lemma \ref{l4} that there exists $b_k^l \in \textbf{\textsl{B}}$ such that $g(b_k^l)=g(c_k^l)$ and the elements $a_l=\sum_{k=1}^{n_l}\alpha_{k}^l b_k^l$ have disjoint support and are homogeneous in $A$ with $\alpha(a_l)=\alpha(x_l)$. Since $f$ is a $2$-graded identity for $A$ we have $f(a_1,\dots, a_n)=0$ and it follows from Proposition \ref{prop1} that $f(a_1^{\prime},\dots, a_n^{\prime})=0$. 
\end{proof}

\begin{flushleft}
\textbf{Acknowledgements}
\end{flushleft}
We thank CNPq for the finantial support and the referee for valuable remarks.

\end{document}